\newtheorem{theorem}{Theorem}
\theoremstyle{plain}
\newtheorem{corollary}{Corollary}
\newtheorem{definition}{Definition}
\numberwithin{equation}{section}
\begin{document}
\title[ Boundedness of Riesz Potential]{Some Inequalities for Riesz
Potential on Homogeneous Variable Exponent Herz-Morrey-Hardy Spaces}
\author{FER\.{I}T G\"{U}RB\"{U}Z}
\address{Department of Mathematics, K\i rklareli University, K\i rklareli
39100, T\"{u}rkiye }
\email{feritgurbuz@klu.edu.tr}
\urladdr{}
\thanks{}
\curraddr{ }
\urladdr{}
\thanks{}
\date{}
\subjclass{Primary 46E35; Secondary 42B25, 42B35.}
\keywords{Riesz potential, variable exponent, homogeneous Morrey-Herz-Hardy
space.}
\dedicatory{}
\thanks{}

\begin{abstract}
In harmonic analysis, studies of inequalities of Riesz potential in various
function spaces have a very important place. Variable exponent Morrey type
spaces and the examines of the boundedness of such operators on these spaces
have an important place in harmonic analysis and have become an interesting
field. In this work, we obtain the boundedness of Riesz potential on
homogeneous variable exponent Herz-Morrey-Hardy spaces under some conditions.
\end{abstract}

\maketitle

\section{Introduction}

This work consists of 3 parts:

$\cdot $ Introduction (Studies conducted in this field have been given.)

$\cdot $ Definitions and Some Properties (General Notations, variable
exponent, Lebesgue space with variable exponent, homogeneous variable
exponent Morrey-Herz spaces, homogeneous variable exponent Herz-Morrey-Hardy
spaces, etc.have been given.)

$\cdot $ Main Result (Under some conditions, the boundedness of Riesz
potential on homogeneous variable exponent Herz-Morrey-Hardy spaces has been
given.)

Over the past twenty years, it has been clear that many contemporary issues
that arise in several mathematical models of applied sciences cannot be
studied using classical function spaces. New function spaces have to be
introduced and investigated as a result. Variable exponent Lebesgue and
Sobolev spaces, grand function spaces, variable exponent Herz spaces,
variable exponent Herz-Morrey type spaces, amalgam spaces and their hybrid
variations are examples of such spaces. Local regularity is better described
by Morrey spaces than by Lebesgue spaces. Consequently, Morrey spaces have
widespread use in both harmonic analysis and partial differential equation
(PDE) theory. After the fundamental work by Kov\'{a}\v{c}ik ve R\'{a}kosn%
\'{\i}k \cite{Kovacik} in 1991, there was a substantial advancement in the
theory of variable exponent function spaces.

Since Fefferman and Stein \cite{Fefferman and Stein} gave a real variable
characterization of Hardy space, the theory of function spaces has been an
important part of harmonic analysis. Many important achievements have been
made around Hardy space and its characterization as the main representative.
In 1968, Herz \cite{Herz} gave a new class of space are called the Herz
spaces, which characterized certain properties of some classical functions.
Actually, Herz spaces is the Lebesgue spaces with power weights $\left \vert
x\right \vert ^{\alpha }$. In recent years, with the discussion of variable
index function space, the results of Paluszynski \cite{Palus} have also been
generalized to variable index function spaces. Herz-type Hardy space is one
of them. The boundedness problem of Herz-type space and many operators on it
has also been rapidly developed \cite{Xu}-\cite{Babar}.

In recent years, many Herz type spaces have emerged. In this work, only
Herz-Morrey-Hardy spaces (which will be defined in the next section) will be
discussed. In this context, Herz-Morrey-Hardy spaces are the generalized
version of Herz-Hardy spaces. We will obtain some new results in these
spaces. We will consider the boundedness of some integral operator on
Herz-Morrey-Hardy spaces. In 2015, Xu and Yang \cite{Xu} first introduced
Herz-Morrey-Hardy spaces with variable exponents, secondly gave some results
concerning the boundedness of some singular integral operators on these
spaces. In 2016, Xu and Yang \cite{XU} gave the molecular decomposition of
variable exponent Herz-Morrey-Hardy spaces.

We wonder if these results \cite{Xu}\ can be generalized? In other words,
what properties do Riesz potential operators under size conditions provide
on variable exponent Herz-Morrey-Hardy spaces? Unfortunately, the use of
more general operators instead of classical operators, in other words the
use of the fractional operators and the boundedness of these operators on
these spaces have never been studied. With this work, it is planned to fill
this gap in the literature.

\section{Definitions and Some Properties}

Before giving the main result of this work, we first recall some elementary
facts, notations and necessary definitions that will be needed. We use

${\mathbb{R}^{n}=}\left \{ \left( x_{1},x_{2},\ldots ,x_{n}\right) :\text{
each }x_{i}\text{ is in }%
\mathbb{R}
\right \} $ to denote the $n$-dimensional Euclidean space. An important
class of functions consists of the \textbf{characteristic functions} of
sets. If $E $ is a set we define: $\chi _{E}\left( x\right) =\left \{ 
\begin{array}{cc}
1 & \text{if }x\in E, \\ 
0 & \text{if }x\notin E.%
\end{array}%
\right. .$ Denote by $\left \vert E\right \vert =\dint \limits_{E}dx$ the
Lebesgue measure of the set $E\subset \mathbb{%
\mathbb{R}
}{^{n}}$ and the characteristic function of a set $E$ is denoted by $\chi
_{E}$, hence $\chi _{E}\left( x\right) =1$ if $x\in E$ and zero otherwise. $%
E^{C}$ will always denote the complement of $E$. In this work, we can mainly
focus on the Riesz potential $I^{\beta }$ is defined by 
\begin{equation}
I^{\beta }f(x)=\int \limits_{%
\mathbb{R}
^{n}}\frac{f(y)}{|x-y|^{n-\beta }}dy\qquad 0<\beta <n..  \label{e1}
\end{equation}

Recently, authors have shown an increasing interest in the study of Riesz
potential. For example, $I^{\beta }$ is a bounded operator on a variable
exponent Lebesgue spaces $L^{p\left( \cdot \right) }\left( {\mathbb{R}^{n}}%
\right) $, that is, $I^{\beta }$ is bounded from $L^{p_{1}\left( \cdot
\right) }\left( {\mathbb{R}^{n}}\right) $ to $L^{p_{2}\left( \cdot \right)
}\left( {\mathbb{R}^{n}}\right) $ such that $1<p_{1}\left( \cdot \right)
<p_{2}\left( \cdot \right) <\infty $ and $\frac{1}{p_{2}\left( \cdot \right) 
}=\frac{\beta }{n}-\frac{1}{p_{1}\left( \cdot \right) }>0$ (see \cite{Capone}%
). Now, the definitions and lemmas which we are going to utilize to prove
the main result. Define $\mathcal{P}\left( {\mathbb{R}^{n}}\right) $ to be
the set of $p\left( \cdot \right) :{\mathbb{R}^{n}}\rightarrow \left[
1,\infty \right) $ when $1\leq p_{-}:=\limfunc{essinf}\limits_{x\in {\mathbb{%
R}^{n}}}p\left( x\right) $ and $p_{+}:=\limfunc{esssup}\limits_{x\in {%
\mathbb{R}^{n}}}p\left( x\right) <\infty $. The exponent $p^{\prime }\left(
\cdot \right) $ means the conjugate of $p\left( \cdot \right) $, that is $%
\frac{1}{p\left( x\right) }+\frac{1}{p^{\prime }\left( x\right) }=1$ holds.
We also claim that $C$ stands for a positive constant that can change its
value in each statement without explicit mention.

Now, we first define variable exponent Lebesgue space $L^{p\left( \cdot
\right) }$.

\begin{definition}
Let $p\left( \cdot \right) \in \mathcal{P}\left( {\mathbb{R}^{n}}\right) $.
Variable exponent Lebesgue space $L^{p\left( \cdot \right) }\left( {\mathbb{R%
}^{n}}\right) $ is defined by%
\begin{equation*}
\left \Vert f\right \Vert _{L^{p\left( \cdot \right) }\left( {\mathbb{R}^{n}}%
\right) }:=\inf \left \{ \eta >0:\dint \limits_{{\mathbb{R}^{n}}}\left( 
\frac{\left \vert f\left( x\right) \right \vert }{\eta }\right) ^{p\left(
x\right) }dx\leq 1\right \} <\infty .
\end{equation*}
\end{definition}

Let $f\in L_{loc}^{1}\left( {\mathbb{R}^{n}}\right) $. The Hardy-Littlewood
maximal operator $M$ is defined by%
\begin{equation*}
Mf\left( x\right) :=\sup_{r>0}\frac{1}{r^{n}}\int \limits_{B(x,r)}\left
\vert f\left( y\right) \right \vert dy,\qquad \forall x\in {\mathbb{R}^{n},}
\end{equation*}%
where and what follows $B(x,r)=\left \{ y\in {\mathbb{R}^{n}:}\left \vert
x-y\right \vert <r\right \} $ is the open ball centered at $x$ with radius $%
r $. $\mathcal{B}\left( {\mathbb{R}^{n}}\right) $ is the set of $p\left(
\cdot \right) \in \mathcal{P}\left( {\mathbb{R}^{n}}\right) $ satisfying the
condition that $M$ is bounded on $L^{p\left( \cdot \right) }\left( {\mathbb{R%
}^{n}}\right) $ as $p\left( \cdot \right) \in \mathcal{B}\left( {\mathbb{R}%
^{n}}\right) $ in \cite{Capone}.

\begin{definition}
Assume $\alpha \left( \cdot \right) $ is real-valued function on ${\mathbb{R}%
^{n}}$.

If 
\begin{equation}
\left \vert \alpha \left( x\right) -\alpha \left( y\right) \right \vert \leq 
\frac{C}{-\log \left( \left \vert x-y\right \vert \right) },\qquad \text{ }%
\left \vert x-y\right \vert \leq \frac{1}{2},\forall x,y\in \mathbb{%
\mathbb{R}
}^{n},C>0,  \label{f1}
\end{equation}%
then $\alpha \left( \cdot \right) $ is called local log-H\"{o}lder
continuous on ${\mathbb{R}^{n}}$.

If 
\begin{equation*}
\left \vert \alpha \left( x\right) -\alpha \left( 0\right) \right \vert \leq 
\frac{C}{\log \left( e+1/\left \vert x\right \vert \right) },\qquad \forall
x\in \mathbb{%
\mathbb{R}
}^{n},C>0,
\end{equation*}%
then $\alpha \left( \cdot \right) $ is called log-H\"{o}lder continuous at
origin.

If 
\begin{equation}
\left \vert \alpha \left( x\right) -\alpha _{\infty }\right \vert \leq \frac{%
C}{\log \left( e+\left \vert x\right \vert \right) },\qquad \alpha _{\infty
}\in 
\mathbb{R}
,C>0,  \label{f2}
\end{equation}%
then $\alpha \left( \cdot \right) $ is called log-H\"{o}lder continuous at
infinity.

If $\alpha \left( \cdot \right) $ satisfies both (\ref{f1}) and (\ref{f2}),
then $\alpha \left( \cdot \right) $ is called global log-H\"{o}lder
continuous. Also, the sets of functions above are denoted by $\mathcal{P}%
_{loc}^{\log }\left( 
\mathbb{R}
^{n}\right) $, $\mathcal{P}_{0}^{\log }\left( 
\mathbb{R}
^{n}\right) $, $\mathcal{P}_{\infty }^{\log }\left( 
\mathbb{R}
^{n}\right) $ and $\mathcal{P}^{\log }\left( 
\mathbb{R}
^{n}\right) $, respectively.
\end{definition}

Throughout this work, for simplicity, we denote $L^{p\left( \cdot \right)
}\left( {\mathbb{R}^{n}}\right) $ by $L^{p\left( \cdot \right) }$ and
similarly $B(x,r)$ by $B$. We will use the following results:

When $p\left( \cdot \right) \in \mathcal{B}\left( {\mathbb{R}^{n}}\right) $,
Izuki \cite{Izuki} proved that%
\begin{equation}
\frac{\left \Vert \chi _{S}\right \Vert _{L^{p\left( \cdot \right) }}}{\left
\Vert \chi _{B}\right \Vert _{L^{p\left( \cdot \right) }}}\leq C\left( \frac{%
\left \vert S\right \vert }{\left \vert B\right \vert }\right) ^{\delta
_{1}},\frac{\left \Vert \chi _{S}\right \Vert _{L^{p^{\prime }\left( \cdot
\right) }\left( 
\mathbb{R}
^{n}\right) }}{\left \Vert \chi _{B}\right \Vert _{L^{p^{\prime }\left(
\cdot \right) }\left( 
\mathbb{R}
^{n}\right) }}\leq C\left( \frac{\left \vert S\right \vert }{\left \vert
B\right \vert }\right) ^{\delta _{2}},S\subset B,  \label{1}
\end{equation}%
and{\large {\ }}%
\begin{equation}
\Vert \chi _{B}\Vert _{L^{p(\cdot )}}\Vert \chi _{B}\Vert _{L^{p^{\prime
}(\cdot )}}\leq C\left \vert B\right \vert ,  \label{5}
\end{equation}%
respectively.

By (\ref{1}) and (\ref{5}), we can deduce that 
\begin{equation}
\Vert \chi _{i}\Vert _{L^{q(\cdot )}}\leq \Vert \chi _{B_{i}}\Vert
_{L^{q(\cdot )}}.  \label{6}
\end{equation}

For $p(\cdot )\in \mathcal{P}(\mathbb{R}^{n})$, $f\in L^{p\left( \cdot
\right) }\left( {\mathbb{R}^{n}}\right) $ and $g\in L^{p^{\prime }\left(
\cdot \right) }\left( {\mathbb{R}^{n}}\right) $, H\"{o}lder inequality on
variable exponent Lebesgue spaces holds in the form 
\begin{equation}
\left \vert \dint \limits_{%
\mathbb{R}
^{n}}f\left( x\right) g\left( x\right) dx\right \vert \leq \dint \limits_{%
\mathbb{R}
^{n}}\left \vert f\left( x\right) g\left( x\right) \right \vert dx\leq
r_{p}\left \Vert f\right \Vert _{L^{p\left( \cdot \right) }\left( 
\mathbb{R}
^{n}\right) }\left \Vert g\right \Vert _{L^{p^{\prime }\left( \cdot \right)
}\left( 
\mathbb{R}
^{n}\right) }\qquad r_{p}=1+\frac{1}{p_{-}}-\frac{1}{p_{+}},  \label{3}
\end{equation}%
see Theorem 2.1 in \cite{Kovacik}.

Before giving the definition of the homogeneous variable exponent
Herz-Morrey spaces, let us introduce the following notations.

Let $l\in \mathbb{%
\mathbb{Z}
}$. Then, 
\begin{equation*}
\chi _{l}:=\chi _{F_{l}},F_{l}:=B_{l}\setminus
B_{l-1},B_{l}:=B(0,2^{l})=\{x\in \mathbb{R}^{n}:\left \vert x\right \vert
\leq 2^{l}\}.
\end{equation*}%
For $m\in 
\mathbb{N}
_{0}=%
\mathbb{N}
\cup \left \{ 0\right \} $, we define%
\begin{equation*}
\tilde{\chi}_{m}:=\left \{ 
\begin{array}{ccc}
\chi _{F_{m}} & , & m\geq 1 \\ 
\chi _{B_{0}} & , & m=0%
\end{array}%
\right. ,
\end{equation*}%
where the symbol $%
\mathbb{N}
_{0}$ denotes the set of all nonnegative integers.

Now let's give the definition of the homogeneous variable exponent
Herz-Morrey spaces.

\begin{definition}
Let $q\in \left( 0,\infty \right] $, $p\left( \cdot \right) \in \mathcal{P}%
\left( {\mathbb{R}^{n}}\right) $ and $\lambda \in \left[ 0,\infty \right) $.
Let $\alpha \left( \cdot \right) $ also be a bounded real-valued measurable
function on $\mathbb{R}^{n}\left( \text{that is, }\alpha \left( \cdot
\right) \in L^{\infty }\left( \mathbb{R}^{n}\right) \right) $. Then, the
homogeneous variable exponent Herz-Morrey space is defined by%
\begin{equation*}
M\dot{K}_{p(\cdot ),\lambda }^{\alpha \left( \cdot \right) ,q}(\mathbb{R}%
^{n})=\left \{ f\in L_{loc}^{p(\cdot )}\left( \mathbb{R}^{n}\setminus \{0\}
\right) :\Vert f\Vert _{M\dot{K}_{p(\cdot ),\lambda }^{\alpha \left( \cdot
\right) ,q}(\mathbb{R}^{n})}<\infty \right \} ,
\end{equation*}%
where%
\begin{equation*}
\Vert f\Vert _{M\dot{K}_{p(\cdot ),\lambda }^{\alpha \left( \cdot \right)
,q}(\mathbb{R}^{n})}:=\sup_{L\in \mathbb{%
\mathbb{Z}
}}2^{-L\lambda }\left( \sum \limits_{l=-\infty }^{L}\Vert 2^{l\alpha \left(
\cdot \right) }f\chi _{l}\Vert _{L^{p(\cdot )}}^{q}\right) ^{\frac{1}{q}}.
\end{equation*}
\end{definition}

The following equivalence was used in \cite{Xu} and is also very important
for the proof of our main result.

Firstly, let $p\left( \cdot \right) \in \mathcal{P}\left( {\mathbb{R}^{n}}%
\right) $, $0<q\leq \infty $ and $0\leq \lambda <\infty $. If $\alpha \left(
\cdot \right) \in L^{\infty }\left( \mathbb{R}^{n}\right) \cap \mathcal{P}%
_{0}^{\log }\left( 
\mathbb{R}
^{n}\right) \cap \mathcal{P}_{\infty }^{\log }\left( 
\mathbb{R}
^{n}\right) $, then 
\begin{equation}
\left \Vert f\right \Vert _{M\dot{K}_{p(\cdot ),\lambda }^{\alpha \left(
\cdot \right) ,q}(\mathbb{R}^{n})}^{q}\approx \max \left \{ 
\begin{array}{c}
\sup \limits_{L\leq 0,L\in 
\mathbb{Z}
}2^{-L\lambda q}\left( \sum \limits_{l=-\infty }^{L}2^{lq\alpha \left(
0\right) }\left \Vert f\chi _{l}\right \Vert _{L^{p(\cdot )}}^{q}\right) ,
\\ 
\sup \limits_{L>0,L\in 
\mathbb{Z}
}\left[ 
\begin{array}{c}
2^{-L\lambda q}\left( \sum \limits_{l=-\infty }^{-1}2^{lq\alpha \left(
0\right) }\left \Vert f\chi _{l}\right \Vert _{L^{p(\cdot )}}^{q}\right) \\ 
+2^{-L\lambda q}\left( \sum \limits_{l=0}^{L}2^{lq\alpha \left( \infty
\right) }\left \Vert f\chi _{l}\right \Vert _{L^{p(\cdot )}}^{q}\right)%
\end{array}%
\right]%
\end{array}%
\right \} .  \label{2}
\end{equation}

Now before giving the definition of homogeneous variable exponent
Herz-Morrey-Hardy space $HM\dot{K}_{p(\cdot ),\lambda }^{\alpha \left( \cdot
\right) ,q}(\mathbb{R}^{n})$, we need to recall some notations.

We assume that $\mathcal{S}\left( 
\mathbb{R}
^{n}\right) $ be the Schwartz space of all rapidly decreasing infinitely
differentiable functions on $%
\mathbb{R}
^{n}$, and $\mathcal{S}^{\prime }\left( 
\mathbb{R}
^{n}\right) $ be the dual space of $\mathcal{S}\left( 
\mathbb{R}
^{n}\right) $. Assume $G_{N}f$ is the grand maximal function of $f$ defined
by%
\begin{equation*}
G_{N}f\left( x\right) :=\sup_{\phi \in \mathcal{A}_{N}}\left \vert \phi
_{\nabla }^{\ast }\left( f\right) \left( x\right) \right \vert ,\qquad x\in 
\mathbb{R}
^{n},
\end{equation*}%
where $\mathcal{A}_{N}$ and $\phi _{\nabla }^{\ast }$ are defined in \cite%
{Xu}, respectively.

In 2015, Xu and Yang \cite{Xu} introduced the homogeneous variable exponent
Herz-Morrey-Hardy spaces $HM\dot{K}_{p(\cdot ),\lambda }^{\alpha \left(
\cdot \right) ,q}(\mathbb{R}^{n})$ as follows:

\begin{definition}
Let $\alpha \left( \cdot \right) \in L^{\infty }\left( \mathbb{R}^{n}\right) 
$, $q\in \left( 0,\infty \right] $, $p\left( \cdot \right) \in \mathcal{P}%
\left( {\mathbb{R}^{n}}\right) $, $\lambda \in \left[ 0,\infty \right) $ and 
$N>n+1$. Then, the homogeneous variable exponent Herz-Morrey-Hardy space $HM%
\dot{K}_{p(\cdot ),\lambda }^{\alpha \left( \cdot \right) ,q}(\mathbb{R}%
^{n}) $ is defined by%
\begin{equation*}
HM\dot{K}_{p(\cdot ),\lambda }^{\alpha \left( \cdot \right) ,q}(\mathbb{R}%
^{n}):=\left \{ 
\begin{array}{c}
f\in \mathcal{S}^{\prime }\left( 
\mathbb{R}
^{n}\right) :\Vert f\Vert _{HM\dot{K}_{p(\cdot ),\lambda }^{\alpha \left(
\cdot \right) ,q}(\mathbb{R}^{n})}:= \\ 
\Vert G_{N}f\Vert _{M\dot{K}_{p(\cdot ),\lambda }^{\alpha \left( \cdot
\right) ,q}(\mathbb{R}^{n})}<\infty%
\end{array}%
\right \} .
\end{equation*}
\end{definition}

We can say that $G_{N}f\left( x\right) \leq CMf\left( x\right) \left(
C>0\right) $ for $x\in 
\mathbb{R}
^{n}$ (see Proposition in Page 57 in \cite{Stein}). This means that $G_{N}f$
satisfies (13) in Lemma 8 in \cite{Xu}.Thus, if $p\left( \cdot \right) \in 
\mathcal{B}\left( {\mathbb{R}^{n}}\right) $ and $-n\delta _{1}<\alpha \left(
0\right) \leq \alpha _{\infty }<n\delta _{2}$ with $\delta _{1}$, $\delta
_{2}\in \left( 0,1\right) $, then we can write 
\begin{equation*}
HM\dot{K}_{p(\cdot ),\lambda }^{\alpha \left( \cdot \right) ,q}(\mathbb{R}%
^{n})\cap L_{loc}^{p(\cdot )}\left( \mathbb{R}^{n}\setminus \{0\} \right) =M%
\dot{K}_{p(\cdot ),\lambda }^{\alpha \left( \cdot \right) ,q}(\mathbb{R}%
^{n}).
\end{equation*}%
Also, to the show the boundedness of Calder\'{o}n-Zygmund operators, the
authors \cite{Xu} studied atomic characterization of the space $HM\dot{K}%
_{p(\cdot ),\lambda }^{\alpha \left( \cdot \right) ,q}(\mathbb{R}^{n})$ for
the case $n\delta _{2}\leq \alpha \left( 0\right) ,$ $\alpha _{\infty
}<\infty $ in terms of central atom.

\begin{definition}
Let $p\left( \cdot \right) \in \mathcal{P}\left( \mathbb{R}^{n}\right) $, $%
\alpha \left( \cdot \right) \in L^{\infty }\left( \mathbb{R}^{n}\right) \cap 
\mathcal{P}_{0}^{\log }\left( 
\mathbb{R}
^{n}\right) \cap \mathcal{P}_{\infty }^{\log }\left( 
\mathbb{R}
^{n}\right) $, and nonnegative integer $s\geqslant \left[ \alpha
_{r}-n\delta _{2}\right] $; here $\alpha _{r}=\alpha (0)$, if $r<1$, and $%
\alpha _{r}=\alpha _{\infty }$, if $r\geqslant 1$, $n\delta _{2}\leq \alpha
_{r}<\infty $ and $\delta _{2}$ as in (\ref{1}).

$(i)$ If a function $a$ on $\mathbb{R}^{n}$satisfies%
\begin{eqnarray*}
\left( 1\right) \text{ supp }a &\subset &B\left( 0,2^{r}\right) \\
\left( 2\right) \text{ }\left \Vert a\right \Vert _{L^{p\left( \cdot \right)
}} &\leq &\left \vert B\left( 0,2^{r}\right) \right \vert ^{-\frac{\alpha
_{r}}{n}} \\
\left( 3\right) \text{ }\dint \limits_{%
\mathbb{R}
^{n}}a\left( x\right) x^{\beta }dx &=&0,\text{ }\left \vert \beta \right
\vert \leq s,
\end{eqnarray*}%
then it is called a central $(\alpha (\cdot ),p(\cdot ))$-atom.

$(ii)$ If a function $a$ on $\mathbb{R}^{n}$satisfies $\left( 2\right) $ and 
$\left( 3\right) $ above and condition given below:%
\begin{equation*}
\left( 1\right) \text{ supp }a\subset B\left( 0,2^{r}\right) ,\text{ }r\geq
1,
\end{equation*}%
then it is called a central $(\alpha (\cdot ),p(\cdot ))$-atom.of limited
type.
\end{definition}

Finally, throughout this paper, we use the notation $f\lesssim g$ if $f\leq
Cg\left( C>0\right) $. If $f\lesssim g$ and $g\lesssim f$, then we will
denote $f\approx g$.

\section{Main Result}

First of all, before giving the main theorem, we will give the following
theorem proved by Xu and Yang \cite{Xu}. This theorem is very necessary for
the proof of the main theorem.

\begin{theorem}
\label{Theorem}Let $0<q<\infty $, $p\left( \cdot \right) \in \mathcal{B}%
\left( {\mathbb{R}^{n}}\right) $, $0\leq \lambda <\infty $, and $\alpha
\left( \cdot \right) \in L^{\infty }\left( \mathbb{R}^{n}\right) \cap 
\mathcal{P}_{0}^{\log }\left( 
\mathbb{R}
^{n}\right) \cap \mathcal{P}_{\infty }^{\log }\left( 
\mathbb{R}
^{n}\right) $ such that $2\lambda \leq \alpha \left( \cdot \right) $, $%
n\delta _{2}<\alpha \left( 0\right) $, $\alpha _{\infty }<\infty $ with $%
\delta _{1}$, $\delta _{2}\in \left( 0,1\right) $ satisfying (\ref{1}).%
\begin{equation*}
f\in HM\dot{K}_{p(\cdot ),\lambda }^{\alpha \left( \cdot \right) ,q}(\mathbb{%
R}^{n})\Longleftrightarrow f=\dsum \limits_{l=-\infty }^{\infty }\lambda
_{l}a_{l}
\end{equation*}%
in the sense of $\mathcal{S}^{\prime }\left( 
\mathbb{R}
^{n}\right) $, where each $a_{l}$ is a central $(\alpha (\cdot ),p(\cdot ))$%
-atom with support contained in $B_{l}$ and $\sup \limits_{L\in 
\mathbb{Z}
}2^{-L\lambda }\sum \limits_{l=-\infty }^{L}\left \vert \lambda
_{l}\right
\vert ^{q}<\infty $. Moreover,%
\begin{equation*}
\Vert f\Vert _{HM\dot{K}_{p(\cdot ),\lambda }^{\alpha \left( \cdot \right)
,q}(\mathbb{R}^{n})}\approx \inf \sup \limits_{L\in 
\mathbb{Z}
}2^{-L\lambda }\left( \sum \limits_{l=-\infty }^{L}\left \vert \lambda
_{l}\right \vert ^{q}\right) ^{\frac{1}{q}}.
\end{equation*}
\end{theorem}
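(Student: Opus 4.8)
The plan is to prove the stated norm equivalence by establishing its two implications separately, since the claim packages together a reconstruction estimate (the converse implication) and an atomic decomposition (the forward implication). I would first treat the \emph{reconstruction} direction, i.e.\ that $f=\sum_{l}\lambda_{l}a_{l}$ with $\sup_{L}2^{-L\lambda}\sum_{l\le L}|\lambda_{l}|^{q}<\infty$ forces $f\in HM\dot{K}_{p(\cdot),\lambda}^{\alpha(\cdot),q}(\mathbb{R}^{n})$ together with the bound $\|f\|_{HM\dot{K}_{p(\cdot),\lambda}^{\alpha(\cdot),q}(\mathbb{R}^{n})}\lesssim\inf\sup_{L}2^{-L\lambda}\bigl(\sum_{l\le L}|\lambda_{l}|^{q}\bigr)^{1/q}$. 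Since $\|f\|_{HM\dot{K}_{p(\cdot),\lambda}^{\alpha(\cdot),q}}=\|G_{N}f\|_{M\dot{K}_{p(\cdot),\lambda}^{\alpha(\cdot),q}}$ and $G_{N}$ is subadditive, I would start from $G_{N}f\le\sum_{l}|\lambda_{l}|\,G_{N}a_{l}$ and, for a fixed central atom $a_{l}$ supported in $B_{l}$, split space into the local part $B_{l+1}$ and the annuli $F_{j}$ with $j>l+1$. On the local part I would use $G_{N}a_{l}\le CMa_{l}$ together with the $L^{p(\cdot)}$-boundedness of $M$ (valid since $p(\cdot)\in\mathcal{B}(\mathbb{R}^{n})$) and the size normalization $\|a_{l}\|_{L^{p(\cdot)}}\le|B_{l}|^{-\alpha_{r}/n}$. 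On each far annulus $F_{j}$ I would exploit the vanishing-moment condition $\int a_{l}(x)x^{\beta}dx=0$ for $|\beta|\le s$: subtracting the order-$s$ Taylor polynomial of the defining kernel and controlling the remainder by the size condition, the Hölder inequality (\ref{3}), and the characteristic-function ratio estimate (\ref{1}) yields a geometric decay factor of the form $2^{(l-j)(n+s+1)}$. Feeding these two estimates into the norm through the equivalent expression (\ref{2}) and splitting the sum over $l$ against the inner sum over $j$, the hypotheses $n\delta_{2}<\alpha(0)$, $\alpha_{\infty}<\infty$ and $s\ge[\alpha_{r}-n\delta_{2}]$ are exactly what make the resulting double geometric series summable, after which a routine application of the $\ell^{q}$-triangle inequality and the supremum over $L$ closes this direction.

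For the \emph{decomposition} direction I would start from $f\in HM\dot{K}_{p(\cdot),\lambda}^{\alpha(\cdot),q}(\mathbb{R}^{n})$, so that $G_{N}f\in M\dot{K}_{p(\cdot),\lambda}^{\alpha(\cdot),q}(\mathbb{R}^{n})$, and build the atoms by a Calder\'on--Zygmund/grand-maximal construction. Concretely, I would form the level sets $\Omega_{k}=\{x:G_{N}f(x)>2^{k}\}$, take a Whitney decomposition and an associated smooth partition of unity on each $\Omega_{k}$, and write $f=\sum_{k}(g^{k+1}-g^{k})$ in $\mathcal{S}^{\prime}(\mathbb{R}^{n})$; the differences $g^{k+1}-g^{k}$ decompose into smooth functions carrying vanishing moments up to order $s$, which after normalization become central $(\alpha(\cdot),p(\cdot))$-atoms. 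I would then regroup these building blocks according to the annulus $F_{l}$ that their supporting balls meet, thereby defining $a_{l}$ and $\lambda_{l}$, verify the three atom conditions (the support and moment conditions being built into the construction, and the $L^{p(\cdot)}$-size condition following from the level $2^{k}$ together with the characteristic-function estimates (\ref{1}) and (\ref{5})), and finally bound $\sup_{L}2^{-L\lambda}\bigl(\sum_{l\le L}|\lambda_{l}|^{q}\bigr)^{1/q}$ by $\|G_{N}f\|_{M\dot{K}_{p(\cdot),\lambda}^{\alpha(\cdot),q}}$, once more using (\ref{2}) to separate the near-origin regime governed by $\alpha(0)$ from the near-infinity regime governed by $\alpha_{\infty}$.

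I expect this second direction to be the main obstacle. The difficulty is that under the standing hypothesis $n\delta_{2}<\alpha(0)$ the elements of the space are genuine distributions rather than locally integrable functions near the origin, so one cannot simply set $a_{l}\propto f\chi_{l}$; the polynomial corrections needed to enforce the moment condition, the verification that the resulting series converges in $\mathcal{S}^{\prime}(\mathbb{R}^{n})$, and above all the proof that the coefficient sequence $\{\lambda_{l}\}$ lies in the weighted $\ell^{q}$-with-supremum space with norm controlled by $\|G_{N}f\|_{M\dot{K}_{p(\cdot),\lambda}^{\alpha(\cdot),q}}$ are the delicate points. It is precisely here that the log-H\"older continuity of $\alpha(\cdot)$ at $0$ and at $\infty$, together with the variable-exponent estimates (\ref{1})--(\ref{5}) and the role of $2\lambda\le\alpha(\cdot)$ in keeping the Morrey supremum finite, must be used quantitatively to convert maximal-function level information into the two-scale Herz-Morrey norm (\ref{2}).
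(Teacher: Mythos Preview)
Your proposal outlines a plausible strategy for proving an atomic decomposition theorem of this type, and the two-direction split (reconstruction via pointwise maximal estimates plus moment cancellation; decomposition via a Calder\'on--Zygmund/level-set construction adapted to the grand maximal function) is indeed the standard architecture for such results. However, there is nothing in the paper to compare it against: the paper does \emph{not} prove this theorem. It is introduced explicitly with the sentence ``we will give the following theorem proved by Xu and Yang \cite{Xu},'' and no argument is supplied; the theorem is simply quoted as a black box and then used as the starting point for the proof of the main result (the boundedness of $I^{\beta}$).

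So the honest comparison is: the paper's ``proof'' is a citation to \cite{Xu}, whereas you have sketched an actual argument. If you want to know whether your sketch matches the original, you would need to consult Xu--Yang directly. Within the present paper, the only relevant observation is that your reconstruction direction is consistent with how the paper \emph{uses} the theorem (atoms $a_{j}$ with $\mathrm{supp}\,a_{j}\subset B_{j}$, size bound $\|a_{j}\|_{L^{p(\cdot)}}\le|B_{j}|^{-\alpha_{j}/n}$, and coefficient control $\sup_{L}2^{-L\lambda}\sum_{j\le L}|\lambda_{j}|^{q}$), and that the hypotheses you single out ($n\delta_{2}<\alpha(0)$, $2\lambda\le\alpha(\cdot)$, the log-H\"older conditions feeding into (\ref{2})) are exactly the ones the paper carries as standing assumptions.
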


\begin{corollary}
By Theorem \ref{Theorem},take $a_{j}=a\cdot \chi _{j}=a\cdot \chi _{B_{j}}$
for each $j\in 
\mathbb{Z}
$, we have 
\begin{equation*}
f=\dsum \limits_{j=0}^{\infty }\lambda _{j}a=\dsum \limits_{j=-\infty
}^{\infty }\lambda _{j}a_{j},
\end{equation*}%
where each $a_{l}$ is a central $(\alpha (\cdot ),q_{1}(\cdot ))$-atom with
support contained in $B_{j}$, and%
\begin{equation*}
\Vert f\Vert _{HM\dot{K}_{p_{1}(\cdot ),\lambda }^{\alpha \left( \cdot
\right) ,q_{1}}(\mathbb{R}^{n})}\approx \inf \sup \limits_{L\in 
\mathbb{Z}
}2^{-L\lambda }\left( \sum \limits_{j=-\infty }^{L}\left \vert \lambda
_{j}\right \vert ^{q_{1}}\right) ^{\frac{1}{q_{1}}}.
\end{equation*}%
By (\ref{3}), for each $j,k\in 
\mathbb{Z}
$, with $k\leq L$ and $j\leq k-1$, then $\left \vert x-y\right \vert \sim
\left \vert x\right \vert $, $2\left \vert y\right \vert \leq \left \vert
x\right \vert $, we get%
\begin{eqnarray}
\left \vert I^{\beta }a_{j}\chi _{k}\left( x\right) \right \vert &=&\int
\limits_{%
\mathbb{R}
^{n}}\frac{\left \vert a_{j}\left( x\right) \right \vert }{|x-y|^{n-\beta }}%
dy\cdot \chi _{k}\left( x\right)  \notag \\
&\lesssim &2^{k\left( \beta -n\right) }\int \limits_{%
\mathbb{R}
^{n}}\left \vert a_{j}\left( x\right) \right \vert dy\cdot \chi _{k}\left(
x\right)  \notag \\
&\lesssim &2^{k\left( \beta -n\right) }\left \Vert a_{j}\right \Vert
_{L^{p_{1}(\cdot )}}\left \Vert \chi _{j}\right \Vert _{L^{p_{1}^{\prime
}(\cdot )}}\cdot \chi _{k}\left( x\right) .  \label{4}
\end{eqnarray}%
So applying (\ref{1}), (\ref{6}), (\ref{3}) and (\ref{4}), we have%
\begin{eqnarray}
\left \Vert \left( I^{\beta }a_{j}\right) \chi _{k}\right \Vert
_{L^{p_{2}(\cdot )}} &\lesssim &2^{k\left( \beta -n\right) }\left \Vert
a_{j}\right \Vert _{L^{p_{1}(\cdot )}}\left \Vert \chi _{j}\right \Vert
_{L^{p_{1}^{\prime }(\cdot )}}\left \Vert \chi _{k}\right \Vert
_{L^{p_{2}(\cdot )}}  \notag \\
&\lesssim &2^{k\beta }\left \Vert a_{j}\right \Vert _{L^{p_{1}(\cdot
)}}\left \Vert \chi _{j}\right \Vert _{L^{p_{1}^{\prime }(\cdot
)}}2^{-kn}\left \Vert \chi _{B_{k}}\right \Vert _{L^{p_{2}(\cdot )}}  \notag
\\
&\lesssim &2^{k\beta }\left \Vert a_{j}\right \Vert _{L^{p_{1}(\cdot
)}}\left \Vert \chi _{j}\right \Vert _{L^{p_{1}^{\prime }(\cdot )}}\left
\Vert \chi _{B_{k}}\right \Vert _{L^{p_{2}^{\prime }(\cdot )}}^{-1}  \notag
\\
&\lesssim &2^{k\beta }\left \Vert a_{j}\right \Vert _{L^{p_{1}(\cdot
)}}\left \Vert \chi _{j}\right \Vert _{L^{p_{1}^{\prime }(\cdot )}}\left
\Vert \chi _{B_{k}}\right \Vert _{L^{p_{2}^{\prime }(\cdot )}}^{-1}\frac{%
\left \Vert \chi _{B_{j}}\right \Vert _{L^{p_{2}^{\prime }(\cdot )}}}{\left
\Vert \chi _{B_{k}}\right \Vert _{L^{p_{2}^{\prime }(\cdot )}}}  \notag \\
&\lesssim &2^{k\beta }\left \Vert a_{j}\right \Vert _{L^{p_{1}(\cdot
)}}\left \Vert \chi _{j}\right \Vert _{L^{p_{1}^{\prime }(\cdot )}}\left
\Vert \chi _{B_{j}}\right \Vert _{L^{p_{2}^{\prime }(\cdot
)}}^{-1}2^{n\delta _{2}\left( j-k\right) }  \notag \\
&\lesssim &2^{\left( n\delta _{2}-\beta \right) \left( j-k\right) }\left
\Vert a_{j}\right \Vert _{L^{p_{1}(\cdot )}}\left( 2^{-jn}\left \Vert \chi
_{B_{j}}\right \Vert _{L^{p_{2}(\cdot )}}\left \Vert \chi _{B_{j}}\right
\Vert _{L^{p_{2}^{\prime }(\cdot )}}\right) ^{-1}  \notag \\
&\lesssim &2^{\left( \beta -n\delta _{2}\right) \left( k-j\right) }\left
\Vert a_{j}\right \Vert _{L^{p_{1}(\cdot )}}  \notag \\
&\lesssim &2^{\left( \beta -n\delta _{2}\right) \left( k-j\right) -\alpha
_{j}j},  \label{12}
\end{eqnarray}%
where in the sixth inequality we have used the following fact:

First, we know that 
\begin{equation*}
I^{\beta }\left( \chi _{B_{j}}\right) \left( x\right) \geq \int
\limits_{B_{j}}\frac{dy}{|x-y|^{n-\beta }}\cdot \chi _{B_{j}}\left( x\right)
\geq C2^{\beta j}\chi _{B_{j}}\left( x\right) .
\end{equation*}%
Also, let $p_{1}\left( \cdot \right) \in \mathcal{P}\left( {\mathbb{R}^{n}}%
\right) $.and $0<\beta <\frac{n}{\left( p_{1}\right) _{+}}$. Then, since $%
I^{\beta }$ is bounded from $L^{p_{1}\left( \cdot \right) }$ to $%
L^{p_{2}\left( \cdot \right) }$ and using (\ref{5}), we obtain%
\begin{eqnarray*}
\left \Vert \chi _{B_{j}}\right \Vert _{L^{p_{2}(\cdot )}} &\lesssim
&2^{-j\beta }\left \Vert I^{\beta }\left( \chi _{B_{j}}\right) \right \Vert
_{L^{p_{2}(\cdot )}} \\
&\lesssim &2^{-j\beta }\left \Vert \chi _{B_{j}}\right \Vert
_{L^{p_{1}(\cdot )}} \\
&\lesssim &2^{\left( n-\beta \right) j}\left \Vert \chi _{B_{j}}\right \Vert
_{L^{p_{1}^{\prime }(\cdot )}}^{-1} \\
&\lesssim &2^{\left( n-\beta \right) j}\left \Vert \chi _{j}\right \Vert
_{L^{p_{1}^{\prime }(\cdot )}}^{-1}.
\end{eqnarray*}%
On the other hand, for each $j,k\in 
\mathbb{Z}
$, with $k\leq L$ and $j\geq k-1$, then $\left \vert x-y\right \vert \sim
\left \vert x\right \vert $, $2\left \vert y\right \vert \leq \left \vert
x\right \vert $. Then, $\left \Vert \left( I^{\beta }a_{j}\right) \chi
_{k}\right \Vert _{L^{p_{2}(\cdot )}}$ has the same estimate above, here we
omit the details, thus the inequality 
\begin{equation}
\left \Vert \left( I^{\beta }a_{j}\right) \chi _{k}\right \Vert
_{L^{p_{2}(\cdot )}}\lesssim 2^{\left( \beta -n\delta _{1}\right) \left(
j-k\right) -\alpha _{j}j}  \label{0}
\end{equation}%
is valid.
\end{corollary}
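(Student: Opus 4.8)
The plan is to lean entirely on the atomic decomposition supplied by Theorem \ref{Theorem}: once $f=\sum_{j}\lambda_{j}a_{j}$ with each $a_{j}$ a central $(\alpha(\cdot),p_{1}(\cdot))$-atom supported in $B_{j}$, the boundedness of $I^{\beta }$ on these spaces reduces to controlling each elementary block $\left\Vert (I^{\beta }a_{j})\chi_{k}\right\Vert_{L^{p_{2}(\cdot )}}$ with geometric decay in $|j-k|$. Hence the real content of the statement is precisely the two estimates $(\ref{12})$ and $(\ref{0})$, and I would organise the whole argument around a dichotomy in the relative position of the annulus $F_{k}$, where the norm is measured, and the support ball $B_{j}$, where the atom lives: the subcritical regime $j\le k-1$ and the complementary regime $j\ge k-1$.

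First I would treat $j\le k-1$. For $x\in F_{k}$ and $y\in\operatorname{supp}a_{j}\subset B_{j}$ one has $2|y|\le|x|$, so $|x-y|\sim|x|\sim 2^{k}$ and the kernel $|x-y|^{-(n-\beta )}$ may be replaced by the constant $2^{k(\beta -n)}$. Pulling this factor out and applying the variable-exponent H\"{o}lder inequality $(\ref{3})$ to $\int|a_{j}(y)|\,dy$ over the support gives the pointwise bound $(\ref{4})$, namely $|(I^{\beta }a_{j})\chi_{k}|\lesssim 2^{k(\beta -n)}\Vert a_{j}\Vert_{L^{p_{1}(\cdot )}}\Vert\chi_{j}\Vert_{L^{p_{1}'(\cdot )}}\chi_{k}$. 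Taking the $L^{p_{2}(\cdot )}$-norm and using $(\ref{6})$ turns $\Vert\chi_{k}\Vert_{L^{p_{2}(\cdot )}}$ into $\Vert\chi_{B_{k}}\Vert_{L^{p_{2}(\cdot )}}$; relation $(\ref{5})$ then trades this for $2^{kn}\Vert\chi_{B_{k}}\Vert_{L^{p_{2}'(\cdot )}}^{-1}$, and the second inequality in $(\ref{1})$ (exponent $\delta_{2}$, valid since $B_{j}\subset B_{k}$) replaces $\Vert\chi_{B_{k}}\Vert_{L^{p_{2}'(\cdot )}}^{-1}$ by $2^{n\delta_{2}(j-k)}\Vert\chi_{B_{j}}\Vert_{L^{p_{2}'(\cdot )}}^{-1}$. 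Collecting powers of $2$ isolates the prefactor $2^{(\beta -n\delta_{2})(k-j)}$ multiplied by the residual $\Vert a_{j}\Vert_{L^{p_{1}(\cdot )}}\Vert\chi_{j}\Vert_{L^{p_{1}'(\cdot )}}\Vert\chi_{B_{j}}\Vert_{L^{p_{2}'(\cdot )}}^{-1}2^{\beta j}$.

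The decisive step, and the one I expect to be the main obstacle, is disposing of this residual, i.e. the \emph{sixth inequality}. The idea is to compare the $L^{p_{2}(\cdot )}$- and $L^{p_{1}(\cdot )}$-sizes of $\chi_{B_{j}}$ through the operator $I^{\beta }$ itself: from the elementary lower bound $I^{\beta }(\chi_{B_{j}})\gtrsim 2^{\beta j}\chi_{B_{j}}$ and the assumed boundedness $I^{\beta }\colon L^{p_{1}(\cdot )}\to L^{p_{2}(\cdot )}$ one obtains $\Vert\chi_{B_{j}}\Vert_{L^{p_{2}(\cdot )}}\lesssim 2^{-j\beta }\Vert\chi_{B_{j}}\Vert_{L^{p_{1}(\cdot )}}$, and a further use of $(\ref{5})$ and $(\ref{6})$ upgrades this to $\Vert\chi_{B_{j}}\Vert_{L^{p_{2}(\cdot )}}\lesssim 2^{(n-\beta )j}\Vert\chi_{j}\Vert_{L^{p_{1}'(\cdot )}}^{-1}$. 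Substituting this back, and using once more that $\Vert\chi_{B_{j}}\Vert_{L^{p_{2}(\cdot )}}\Vert\chi_{B_{j}}\Vert_{L^{p_{2}'(\cdot )}}\approx|B_{j}|\approx 2^{jn}$ (the upper half being $(\ref{5})$ and the lower half H\"{o}lder $(\ref{3})$ applied to $\chi_{B_{j}}=\chi_{B_{j}}\chi_{B_{j}}$), the residual collapses to $\Vert a_{j}\Vert_{L^{p_{1}(\cdot )}}$. The atomic size bound $\Vert a_{j}\Vert_{L^{p_{1}(\cdot )}}\lesssim|B_{j}|^{-\alpha_{j}/n}\approx 2^{-j\alpha_{j}}$ then produces exactly $(\ref{12})$.

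Finally, in the complementary regime $j\ge k-1$ the geometry is reversed, $B_{k}\subset B_{j}$, and I would run the identical chain of inequalities, now invoking the first inequality in $(\ref{1})$ with exponent $\delta_{1}$ in place of the $\delta_{2}$-estimate; this swaps the decay prefactor to $2^{(\beta -n\delta_{1})(j-k)}$ and yields $(\ref{0})$, the parallel computation being omitted just as in the source. I expect the only genuinely delicate issue to be the uniformity of all implied constants in $j$ and $k$: it is precisely the hypotheses $p_{1}(\cdot )\in\mathcal{B}(\mathbb{R}^{n})$ and the log-H\"{o}lder conditions on $\alpha(\cdot )$ that make $(\ref{1})$, $(\ref{5})$ and the $L^{p_{1}(\cdot )}\to L^{p_{2}(\cdot )}$ bound available with scale-independent constants, so that the two geometric estimates can afterwards be summed against the coefficients $\lambda_{j}$ to close the argument.
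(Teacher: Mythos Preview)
Your proposal is correct and follows essentially the same route as the paper: the pointwise bound via $|x-y|\sim 2^{k}$ and H\"{o}lder $(\ref{3})$ to obtain $(\ref{4})$, the chain of manipulations through $(\ref{1})$, $(\ref{5})$, $(\ref{6})$ on the $L^{p_{2}(\cdot)}$-norm, the key ``sixth inequality'' handled via the lower bound $I^{\beta}(\chi_{B_{j}})\gtrsim 2^{\beta j}\chi_{B_{j}}$ together with the $L^{p_{1}(\cdot)}\to L^{p_{2}(\cdot)}$ boundedness of $I^{\beta}$, and the symmetric treatment of the regime $j\ge k-1$ with $\delta_{1}$ in place of $\delta_{2}$. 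Your explicit remark that the two-sided equivalence $\Vert\chi_{B_{j}}\Vert_{L^{p_{2}(\cdot)}}\Vert\chi_{B_{j}}\Vert_{L^{p_{2}'(\cdot)}}\approx |B_{j}|$ requires both $(\ref{5})$ and H\"{o}lder applied to $\chi_{B_{j}}=\chi_{B_{j}}\cdot\chi_{B_{j}}$ is a useful clarification of a step the paper leaves implicit.
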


The following theorem is our main result.

\begin{theorem}
Let $0<q_{1}\leq q_{2}<\infty $, $0<\lambda <\infty $, $0<\beta <n$, $%
0<p_{1}\leq p_{2}<\infty $, $p_{1}\left( \cdot \right) $, $p_{2}\left( \cdot
\right) \in \mathcal{B}\left( {\mathbb{R}^{n}}\right) $ with $\frac{1}{%
p_{1}\left( \cdot \right) }=\frac{1}{p_{2}\left( \cdot \right) }+\frac{\beta 
}{n}$ and $\alpha \left( \cdot \right) \in L^{\infty }\left( \mathbb{R}%
^{n}\right) \cap \mathcal{P}_{0}^{\log }\left( 
\mathbb{R}
^{n}\right) \cap \mathcal{P}_{\infty }^{\log }\left( 
\mathbb{R}
^{n}\right) $ such that $2\lambda \leq \alpha \left( \cdot \right) $, $\beta
-n\delta _{2}<\alpha \left( 0\right) $, $\alpha _{\infty }<\infty $ with $%
\delta _{1}$, $\delta _{2}\in \left( 0,1\right) $ satisfying (\ref{1}). Let
also $I^{\beta }$ be defined as in (\ref{e1}) and be bounded from $%
L^{p_{1}\left( \cdot \right) }\left( {\mathbb{R}^{n}}\right) $ to $%
L^{p_{2}\left( \cdot \right) }\left( {\mathbb{R}^{n}}\right) $ when $%
1<p_{1}\left( \cdot \right) <p_{2}\left( \cdot \right) <\infty $ and $\frac{1%
}{p_{2}\left( \cdot \right) }=\frac{\beta }{n}-\frac{1}{p_{1}\left( \cdot
\right) }>0$. Then for all $f\in HM\dot{K}_{p_{1}(\cdot ),\lambda }^{\alpha
\left( \cdot \right) ,q_{1}}(\mathbb{R}^{n})$, $I^{\beta }$ is bounded from $%
HM\dot{K}_{p_{1}(\cdot ),\lambda }^{\alpha \left( \cdot \right) ,q_{1}}(%
\mathbb{R}^{n})$ to $M\dot{K}_{p_{2}(\cdot ),\lambda }^{\alpha \left( \cdot
\right) ,q_{2}}(\mathbb{R}^{n})$. That is,%
\begin{equation}
\left \Vert I^{\beta }f\right \Vert _{M\dot{K}_{p_{2}(\cdot ),\lambda
}^{\alpha \left( \cdot \right) ,q_{2}}(\mathbb{R}^{n})}\lesssim \left \Vert
f\right \Vert _{HM\dot{K}_{p_{1}(\cdot ),\lambda }^{\alpha \left( \cdot
\right) ,q_{1}}(\mathbb{R}^{n})}.  \label{100}
\end{equation}
\end{theorem}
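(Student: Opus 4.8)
The plan is to reduce everything to the atomic decomposition furnished by Theorem \ref{Theorem}. Since $f\in HM\dot{K}_{p_{1}(\cdot),\lambda}^{\alpha(\cdot),q_{1}}(\mathbb{R}^{n})$, I would write $f=\sum_{j=-\infty}^{\infty}\lambda_{j}a_{j}$ in $\mathcal{S}'(\mathbb{R}^{n})$, where each $a_{j}$ is a central $(\alpha(\cdot),p_{1}(\cdot))$-atom supported in $B_{j}$ and $\sup_{L}2^{-L\lambda}(\sum_{j\le L}|\lambda_{j}|^{q_{1}})^{1/q_{1}}\lesssim\|f\|_{HM\dot{K}_{p_{1}(\cdot),\lambda}^{\alpha(\cdot),q_{1}}(\mathbb{R}^{n})}$. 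By linearity $I^{\beta}f=\sum_{j}\lambda_{j}I^{\beta}a_{j}$, so for each dyadic annulus $F_{k}$ one has $\|(I^{\beta}f)\chi_{k}\|_{L^{p_{2}(\cdot)}}\le\sum_{j}|\lambda_{j}|\,\|(I^{\beta}a_{j})\chi_{k}\|_{L^{p_{2}(\cdot)}}$, and the two single-atom estimates already established in the Corollary, namely (\ref{12}) for $j<k$ and (\ref{0}) for $j\ge k$, will do all the analytic work; what remains is summation.

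I would then expand the target norm through the equivalence (\ref{2}), which breaks $\|I^{\beta}f\|_{M\dot{K}_{p_{2}(\cdot),\lambda}^{\alpha(\cdot),q_{2}}(\mathbb{R}^{n})}$ into a supremum over $L\le0$, governed by $\alpha(0)$, and a supremum over $L>0$, governed by $\alpha(0)$ on the annuli with $l<0$ and by $\alpha_{\infty}$ on those with $l\ge0$. In each regime I split the atom sum at $j=k$ into a far part $\sum_{j<k}$ and a near part $\sum_{j\ge k}$, insert (\ref{12}) and (\ref{0}) respectively, and absorb the atom-size factors $2^{-\alpha_{j}j}$ (with $\alpha_{j}=\alpha(0)$ for $j\le0$ and $\alpha_{j}=\alpha_{\infty}$ for $j\ge1$). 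After multiplying by the power weight $2^{k\alpha(0)}$ or $2^{k\alpha_{\infty}}$, each piece becomes a discrete convolution of the coefficient sequence $(|\lambda_{j}|)$ against a geometric kernel, with rate assembled from $\alpha(0)$ (resp.\ $\alpha_{\infty}$), $\beta$, and $n\delta_{2}$ in the far part or $n\delta_{1}$ in the near part.

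Summing these geometric series is the crux. For the far part the rate is $\alpha(0)+\beta-n\delta_{2}$ (and its $\alpha_{\infty}$ analogue), which I would control against the Morrey exponent using $\beta-n\delta_{2}<\alpha(0)$, $\alpha_{\infty}<\infty$ and the hypothesis $2\lambda\le\alpha(\cdot)$; for the near part the rate $\alpha(0)-\beta+n\delta_{1}$ (and its analogue at infinity) is handled using $\delta_{1}\in(0,1)$, $\alpha_{\infty}<\infty$ and the scaling relation $\tfrac{1}{p_{1}(\cdot)}=\tfrac{1}{p_{2}(\cdot)}+\tfrac{\beta}{n}$. In both cases the point is that the bound $\sum_{j\le m}|\lambda_{j}|^{q_{1}}\le2^{m\lambda q_{1}}\|f\|^{q_{1}}$, valid uniformly in $m$, lets the factor $2^{-L\lambda q}$ absorb the partial sums after one interchange of the orders of summation (or an Abel summation). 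To move from the source exponent $q_{1}$ to the target exponent $q_{2}$ I would split into $q_{1}\le1$, where the subadditivity $(\sum b_{j})^{q_{1}}\le\sum b_{j}^{q_{1}}$ applies, and $q_{1}>1$, where Hölder peels off a summable geometric factor; the embedding $\ell^{q_{1}}\hookrightarrow\ell^{q_{2}}$, valid since $q_{1}\le q_{2}$, then upgrades the inner index and lets me recognize the source norm.

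The step I expect to be the main obstacle is exactly this convergence-and-interchange bookkeeping: one must check that every geometric series converges, which pins the admissible decay rates against $\lambda$ through $2\lambda\le\alpha(\cdot)$, and one must keep the estimates uniform in $L$ while interchanging $\sum_{k}$ with $\sum_{j}$ and performing the $q_{1}\to q_{2}$ passage. The case analysis is fourfold (far/near times $L\le0$/$L>0$), each branch further subdivided according to the sign of $j$ so that the correct value of $\alpha_{j}$ is used; this is routine but error-prone, and the genuinely load-bearing hypotheses are $\beta-n\delta_{2}<\alpha(0)$, $\alpha_{\infty}<\infty$, and $2\lambda\le\alpha(\cdot)$.
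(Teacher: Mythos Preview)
Your overall architecture is exactly the paper's: atomic decomposition via Theorem~\ref{Theorem}, the norm equivalence (\ref{2}) to split into the regimes $L\le 0$ and $L>0$ (with the latter further split at $k=0$), then in each regime a far/near split of the atom sum at $j=k$, and finally the case distinction $0<q_{1}\le 1$ versus $1<q_{1}<\infty$ handled by subadditivity or H\"older. The far part $j<k$ via (\ref{12}) and the hypothesis $\beta-n\delta_{2}<\alpha(0)$ is precisely what the paper does.

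There is, however, a gap in your treatment of the near part $j\ge k$. You propose to use (\ref{0}), which after multiplying by $2^{k\alpha(0)}$ (or $2^{k\alpha_{\infty}}$) produces the geometric rate $\alpha(0)-\beta+n\delta_{1}$; for the series over $j\ge k$ to converge you would need $\alpha(0)>\beta-n\delta_{1}$, and none of the listed hypotheses ($\delta_{1}\in(0,1)$, $\alpha_{\infty}<\infty$, the scaling relation $\tfrac{1}{p_{1}(\cdot)}=\tfrac{1}{p_{2}(\cdot)}+\tfrac{\beta}{n}$) delivers this. The paper sidesteps the issue entirely: for $j\ge k$ it does \emph{not} use (\ref{0}) but rather the direct $L^{p_{1}(\cdot)}\to L^{p_{2}(\cdot)}$ boundedness of $I^{\beta}$ together with the atom size condition, giving
\[
\left\Vert (I^{\beta}a_{j})\chi_{k}\right\Vert_{L^{p_{2}(\cdot)}}\le\left\Vert a_{j}\right\Vert_{L^{p_{1}(\cdot)}}\le 2^{-j\alpha_{j}},
\]
which is (\ref{10}) in the proof. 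This yields the rate $\alpha(0)$ (resp.\ $\alpha_{\infty}$) in $k-j$, and convergence then follows from $\alpha(\cdot)\ge 2\lambda>0$ alone. Replacing your use of (\ref{0}) by this estimate closes the gap and the rest of your bookkeeping goes through as written.
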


\begin{proof}
Assume $f\in HM\dot{K}_{p_{1}(\cdot ),\lambda }^{\alpha \left( \cdot \right)
,q_{1}}(\mathbb{R}^{n})$, $f=\dsum \limits_{j=-\infty }^{\infty }\lambda
_{j}a_{j}$ and%
\begin{equation*}
\Vert f\Vert _{HM\dot{K}_{p_{1}(\cdot ),\lambda }^{\alpha \left( \cdot
\right) ,q_{1}}(\mathbb{R}^{n})}\approx \inf \sup \limits_{L\in 
\mathbb{Z}
}2^{-L\lambda }\left( \sum \limits_{j=-\infty }^{L}\left \vert \lambda
_{j}\right \vert ^{q_{1}}\right) ^{\frac{1}{q_{1}}}.
\end{equation*}%
For convenience, we indicate 
\begin{equation*}
\Lambda =\sup \limits_{L\in 
\mathbb{Z}
}2^{-L\lambda }\sum \limits_{j=-\infty }^{L}\left \vert \lambda _{j}\right
\vert ^{q_{1}}.
\end{equation*}%
By (\ref{2}), we have 
\begin{eqnarray*}
\left \Vert I^{\beta }f\right \Vert _{M\dot{K}_{p_{2}(\cdot ),\lambda
}^{\alpha \left( \cdot \right) ,q_{2}}(\mathbb{R}^{n})}^{q_{1}} &\approx
&\max \left \{ 
\begin{array}{c}
\sup \limits_{L\leq 0,L\in 
\mathbb{Z}
}2^{-L\lambda q_{1}}\left( \sum \limits_{k=-\infty }^{L}2^{kq_{1}\alpha
\left( 0\right) }\left \Vert \left( I^{\beta }f\right) \chi _{k}\right \Vert
_{L^{p_{2}(\cdot )}}^{q_{1}}\right) , \\ 
\sup \limits_{L>0,L\in 
\mathbb{Z}
}\left[ 
\begin{array}{c}
2^{-L\lambda q_{1}}\left( \sum \limits_{k=-\infty }^{-1}2^{kq_{1}\alpha
\left( 0\right) }\left \Vert \left( I^{\beta }f\right) \chi _{k}\right \Vert
_{L^{p_{2}(\cdot )}}^{q_{1}}\right) \\ 
+2^{-L\lambda q_{1}}\left( \sum \limits_{k=0}^{L}2^{kq_{1}\alpha _{\infty
}}\left \Vert \left( I^{\beta }f\right) \chi _{k}\right \Vert
_{L^{p_{2}(\cdot )}}^{q_{1}}\right)%
\end{array}%
\right]%
\end{array}%
\right \} \\
&\lesssim &\left \{ F,G+H\right \} ,
\end{eqnarray*}%
where%
\begin{eqnarray*}
F &:&=\sup \limits_{L\leq 0,L\in 
\mathbb{Z}
}2^{-L\lambda q_{1}}\left( \sum \limits_{k=-\infty }^{L}2^{kq_{1}\alpha
\left( 0\right) }\left \Vert \left( I^{\beta }f\right) \chi _{k}\right \Vert
_{L^{p_{2}(\cdot )}}^{q_{1}}\right) \\
G &:&=\sum \limits_{k=-\infty }^{-1}2^{kq_{1}\alpha \left( 0\right) }\left
\Vert \left( I^{\beta }f\right) \chi _{k}\right \Vert _{L^{p_{2}(\cdot
)}}^{q_{1}} \\
H &:&=\sup \limits_{L>0,L\in 
\mathbb{Z}
}2^{-L\lambda q_{1}}\left( \sum \limits_{k=0}^{L}2^{kq_{1}\alpha _{\infty
}}\left \Vert \left( I^{\beta }f\right) \chi _{k}\right \Vert
_{L^{p_{2}(\cdot )}}^{q_{1}}\right) .
\end{eqnarray*}

We first estimate $F:$%
\begin{eqnarray*}
F &:&=\sup \limits_{L\leq 0,L\in 
\mathbb{Z}
}2^{-L\lambda q_{1}}\left( \sum \limits_{k=-\infty }^{L}2^{kq_{1}\alpha
\left( 0\right) }\left \Vert \left( I^{\beta }f\right) \chi _{k}\right \Vert
_{L^{p_{2}(\cdot )}}^{q_{1}}\right) \\
&\leq &\sup \limits_{L\leq 0,L\in 
\mathbb{Z}
}2^{-L\lambda q_{1}}\sum \limits_{k=-\infty }^{L}2^{kq_{1}\alpha \left(
0\right) }\left( \sum \limits_{j=k}^{\infty }\left \vert \lambda _{j}\right
\vert \left \Vert \left( I^{\beta }a_{j}\right) \chi _{k}\right \Vert
_{L^{p_{2}(\cdot )}}\right) ^{q_{1}} \\
&&+\sup \limits_{L\leq 0,L\in 
\mathbb{Z}
}2^{-L\lambda q_{1}}\sum \limits_{k=-\infty }^{L}2^{kq_{1}\alpha \left(
0\right) }\left( \sum \limits_{j=-\infty }^{k-1}\left \vert \lambda
_{j}\right \vert \left \Vert \left( I^{\beta }a_{j}\right) \chi _{k}\right
\Vert _{L^{p_{2}(\cdot )}}\right) ^{q_{1}} \\
&:&=F_{1}+F_{2}.
\end{eqnarray*}%
Since $I^{\beta }$ is bounded from $L^{p_{1}\left( \cdot \right) }$ to $%
L^{p_{2}\left( \cdot \right) }$, then we have%
\begin{equation}
\left \Vert \left( I^{\beta }a_{j}\right) \chi _{k}\right \Vert
_{L^{p_{2}(\cdot )}}\leq \left \Vert a_{j}\right \Vert _{L^{p_{1}(\cdot
)}}\leq \left \vert B_{j}\right \vert ^{-\frac{\alpha _{j}}{n}}=2^{-j\alpha
_{j}}.  \label{10}
\end{equation}%
Hence, we consider $F_{1}$ in two cases.

\textbf{Case 1 }$\left( 0<q_{1}\leq 1\right) .$

By (\ref{10}) and the inequality%
\begin{equation}
\left( \dsum \limits_{j=1}^{\infty }c_{j}\right) ^{q_{1}}\leq \dsum
\limits_{j=1}^{\infty }c_{j}^{q_{1}}\text{, }c_{j}\geq 0,\text{ }j\in 
\mathbb{N}
,  \label{11}
\end{equation}%
we have%
\begin{eqnarray*}
F_{1} &=&\sup \limits_{L\leq 0,L\in 
\mathbb{Z}
}2^{-L\lambda q_{1}}\sum \limits_{k=-\infty }^{L}2^{kq_{1}\alpha \left(
0\right) }\left( \sum \limits_{j=k}^{\infty }\left \vert \lambda _{j}\right
\vert \left \Vert \left( I^{\beta }a_{j}\right) \chi _{k}\right \Vert
_{L^{p_{2}(\cdot )}}\right) ^{q_{1}} \\
&\lesssim &\sup \limits_{L\leq 0,L\in 
\mathbb{Z}
}2^{-L\lambda q_{1}}\sum \limits_{k=-\infty }^{L}2^{kq_{1}\alpha \left(
0\right) }\left( \sum \limits_{j=k}^{\infty }\left \vert \lambda _{j}\right
\vert 2^{-j\alpha _{j}}\right) ^{q_{1}} \\
&\lesssim &\sup \limits_{L\leq 0,L\in 
\mathbb{Z}
}2^{-L\lambda q_{1}}\sum \limits_{k=-\infty }^{L}2^{kq_{1}\alpha \left(
0\right) }\left( \sum \limits_{j=k}^{-1}\left \vert \lambda _{j}\right \vert
^{q_{1}}2^{-j\alpha \left( 0\right) q_{1}}+\sum \limits_{j=0}^{\infty }\left
\vert \lambda _{j}\right \vert ^{q_{1}}2^{-j\alpha _{\infty }q_{1}}\right)
\end{eqnarray*}%
\begin{eqnarray*}
&\lesssim &\sup \limits_{L\leq 0,L\in 
\mathbb{Z}
}2^{-L\lambda q_{1}}\sum \limits_{k=-\infty }^{L}\sum
\limits_{j=k}^{-1}\left \vert \lambda _{j}\right \vert ^{q_{1}}2^{\left(
k-j\right) \alpha \left( 0\right) q_{1}} \\
&&+\sup \limits_{L\leq 0,L\in 
\mathbb{Z}
}2^{-L\lambda q_{1}}\sum \limits_{k=-\infty }^{L}2^{k\alpha \left( 0\right)
q_{1}}\sum \limits_{j=0}^{\infty }\left \vert \lambda _{j}\right \vert
^{q_{1}}2^{-j\alpha _{\infty }q_{1}} \\
&\lesssim &\sup \limits_{L\leq 0,L\in 
\mathbb{Z}
}2^{-L\lambda q_{1}}\sum \limits_{j=-\infty }^{-1}\left \vert \lambda
_{j}\right \vert ^{q_{1}}\sum \limits_{k=-\infty }^{j}2^{\left( k-j\right)
\alpha \left( 0\right) q_{1}} \\
&&+\sup \limits_{L\leq 0,L\in 
\mathbb{Z}
}\sum \limits_{j=0}^{\infty }2^{-j\lambda q_{1}}\left \vert \lambda
_{j}\right \vert ^{q_{1}}2^{\left( \lambda -\alpha _{\infty }\right)
jq_{1}}2^{-L\lambda q_{1}}\sum \limits_{k=-\infty }^{L}2^{k\alpha \left(
0\right) q_{1}} \\
&\leq &\sup \limits_{L\leq 0,L\in 
\mathbb{Z}
}2^{-L\lambda q_{1}}\sum \limits_{j=-\infty }^{L}\left \vert \lambda
_{j}\right \vert ^{q_{1}}+\sup \limits_{L\leq 0,L\in 
\mathbb{Z}
}2^{-L\lambda q_{1}}\sum \limits_{j=L}^{-1}\left \vert \lambda _{j}\right
\vert ^{q_{1}}\sum \limits_{k=-\infty }^{j}2^{\left( k-j\right) \alpha
\left( 0\right) q_{1}} \\
&&+\Lambda \sup \limits_{L\leq 0,L\in 
\mathbb{Z}
}\sum \limits_{j=0}^{\infty }2^{\left( \lambda -\alpha _{\infty }\right)
jq_{1}}\sum \limits_{k=-\infty }^{L}2^{\left( \alpha \left( 0\right)
k-L\lambda \right) q_{1}} \\
&\lesssim &\Lambda +\sup \limits_{L\leq 0,L\in 
\mathbb{Z}
}\sum \limits_{j=L}^{-1}2^{-j\lambda q_{1}}\left \vert \lambda _{j}\right
\vert ^{q_{1}}2^{\left( j-L\right) \lambda q_{1}}\sum \limits_{k=-\infty
}^{j}2^{\left( k-j\right) \alpha \left( 0\right) q_{1}}+\Lambda \\
&\lesssim &\Lambda +\Lambda \sup \limits_{L\leq 0,L\in 
\mathbb{Z}
}\sum \limits_{j=L}^{-1}2^{\left( j-L\right) \lambda q_{1}}\sum
\limits_{k=-\infty }^{j}2^{\left( k-j\right) \alpha \left( 0\right) q_{1}} \\
&\lesssim &\Lambda \text{ }\left( \alpha _{\infty }>\lambda \right) .
\end{eqnarray*}%
\textbf{Case 2 }$\left( 1<q_{1}<\infty \right) .$

In this case, we use H\"{o}lder's inequality instead of (\ref{11}). Let $%
\frac{1}{q_{1}}+\frac{1}{q_{1}^{\prime }}=1$. By (\ref{10}) and H\"{o}lder's
inequality,%
\begin{eqnarray*}
F_{1} &\lesssim &\sup \limits_{L\leq 0,L\in 
\mathbb{Z}
}2^{-L\lambda q_{1}}\sum \limits_{k=-\infty }^{L}2^{kq_{1}\alpha \left(
0\right) }\left( \sum \limits_{j=k}^{\infty }\left \vert \lambda _{j}\right
\vert 2^{-j\alpha _{j}}\right) ^{q_{1}} \\
&\lesssim &\sup \limits_{L\leq 0,L\in 
\mathbb{Z}
}2^{-L\lambda q_{1}}\sum \limits_{k=-\infty }^{L}\left( \sum
\limits_{j=k}^{-1}\left \vert \lambda _{j}\right \vert 2^{\left( k-j\right)
\alpha \left( 0\right) }\right) ^{q_{1}} \\
&&+\sup \limits_{L\leq 0,L\in 
\mathbb{Z}
}2^{-L\lambda q_{1}}\sum \limits_{k=-\infty }^{L}2^{k\alpha \left( 0\right)
q_{1}}\left( \sum \limits_{j=0}^{\infty }\left \vert \lambda _{j}\right
\vert 2^{-j\alpha _{\infty }}\right) ^{q_{1}} \\
&\lesssim &\sup \limits_{L\leq 0,L\in 
\mathbb{Z}
}2^{-L\lambda q_{1}}\sum \limits_{k=-\infty }^{L}\left( \sum
\limits_{j=k}^{-1}\left \vert \lambda _{j}\right \vert ^{q_{1}}2^{\frac{%
\left( k-j\right) \alpha \left( 0\right) q_{1}}{2}}\right) \left( \sum
\limits_{j=k}^{-1}2^{\frac{\left( k-j\right) \alpha \left( 0\right)
q_{1}^{\prime }}{2}}\right) ^{\frac{q_{1}}{q_{1}^{\prime }}} \\
&&+\sup \limits_{L\leq 0,L\in 
\mathbb{Z}
}2^{-L\lambda q_{1}}\sum \limits_{k=-\infty }^{L}2^{kq_{1}\alpha \left(
0\right) }\left( \sum \limits_{j=0}^{\infty }\left \vert \lambda _{j}\right
\vert ^{q_{1}}2^{\frac{-j\alpha _{\infty }q_{1}}{2}}\right) \left( \sum
\limits_{j=0}^{\infty }2^{\frac{-j\alpha _{\infty }q_{1}^{\prime }}{2}%
}\right) ^{\frac{q_{1}}{q_{1}^{\prime }}}
\end{eqnarray*}%
\begin{eqnarray*}
&\lesssim &\sup \limits_{L\leq 0,L\in 
\mathbb{Z}
}2^{-L\lambda q_{1}}\sum \limits_{k=-\infty }^{L}\sum
\limits_{j=k}^{-1}\left \vert \lambda _{j}\right \vert ^{q_{1}}2^{\frac{%
\left( k-j\right) \alpha \left( 0\right) q_{1}}{2}} \\
&&+\sup \limits_{L\leq 0,L\in 
\mathbb{Z}
}2^{-L\lambda q_{1}}\sum \limits_{k=-\infty }^{L}2^{kq_{1}\alpha \left(
0\right) }\sum \limits_{j=0}^{\infty }\left \vert \lambda _{j}\right \vert
^{q_{1}}2^{\frac{-j\alpha _{\infty }q_{1}}{2}} \\
&\lesssim &\sup \limits_{L\leq 0,L\in 
\mathbb{Z}
}2^{-L\lambda q_{1}}\sum \limits_{j=-\infty }^{-1}\left \vert \lambda
_{j}\right \vert ^{q_{1}}\sum \limits_{k=-\infty }^{j}2^{\frac{\left(
k-j\right) \alpha \left( 0\right) q_{1}}{2}} \\
&&+\sup \limits_{L\leq 0,L\in 
\mathbb{Z}
}\sum \limits_{j=0}^{\infty }2^{-j\lambda q_{1}}\left \vert \lambda
_{j}\right \vert ^{q_{1}}2^{\left( \lambda -\frac{\alpha _{\infty }}{2}%
\right) jq_{1}}2^{-L\lambda q_{1}}\sum \limits_{k=-\infty }^{L}2^{k\alpha
\left( 0\right) q_{1}} \\
&\leq &\sup \limits_{L\leq 0,L\in 
\mathbb{Z}
}2^{-L\lambda q_{1}}\sum \limits_{j=-\infty }^{L}\left \vert \lambda
_{j}\right \vert ^{q_{1}}+\sup \limits_{L\leq 0,L\in 
\mathbb{Z}
}2^{-L\lambda q_{1}}\sum \limits_{j=L}^{-1}\left \vert \lambda _{j}\right
\vert ^{q_{1}}\sum \limits_{k=-\infty }^{j}2^{\frac{\left( k-j\right) \alpha
\left( 0\right) q_{1}}{2}} \\
&&+\Lambda \sup \limits_{L\leq 0,L\in 
\mathbb{Z}
}\sum \limits_{j=0}^{\infty }2^{\left( \lambda -\frac{\alpha _{\infty }}{2}%
\right) jq_{1}}\sum \limits_{k=-\infty }^{L}2^{\left( \alpha \left( 0\right)
k-L\lambda \right) q_{1}} \\
&\lesssim &\Lambda +\sup \limits_{L\leq 0,L\in 
\mathbb{Z}
}\sum \limits_{j=L}^{-1}2^{-j\lambda q_{1}}\left \vert \lambda _{j}\right
\vert ^{q_{1}}2^{\left( j-L\right) \lambda q_{1}}\sum \limits_{k=-\infty
}^{j}2^{\frac{\left( k-j\right) \alpha \left( 0\right) q_{1}}{2}}+\Lambda \\
&\lesssim &\Lambda +\Lambda \sup \limits_{L\leq 0,L\in 
\mathbb{Z}
}\sum \limits_{j=L}^{-1}2^{\left( j-L\right) \lambda q_{1}}\sum
\limits_{k=-\infty }^{j}2^{\frac{\left( k-j\right) \alpha \left( 0\right)
q_{1}}{2}} \\
&\lesssim &\Lambda \text{ }\left( \alpha _{\infty }>2\lambda \right) .
\end{eqnarray*}%
Thus, we have $F_{1}\lesssim \Lambda $.

Second, we estimate $F_{2}$. We examine $F_{2}$ into two cases $0<q_{1}\leq
1 $ and $1<q_{1}<\infty $.

Let $0<q_{1}\leq 1$. By (\ref{11}), (\ref{12}) and the hypothesis $\beta
-n\delta _{2}<\alpha \left( 0\right) $, we get 
\begin{eqnarray*}
F_{2} &=&\sup \limits_{L\leq 0,L\in 
\mathbb{Z}
}2^{-L\lambda q_{1}}\sum \limits_{k=-\infty }^{L}2^{kq_{1}\alpha \left(
0\right) }\left( \sum \limits_{j=-\infty }^{k-1}\left \vert \lambda
_{j}\right \vert \left \Vert \left( I^{\beta }a_{j}\right) \chi _{k}\right
\Vert _{L^{p_{2}(\cdot )}}\right) ^{q_{1}} \\
&\lesssim &\sup \limits_{L\leq 0,L\in 
\mathbb{Z}
}2^{-L\lambda q_{1}}\sum \limits_{k=-\infty }^{L}2^{kq_{1}\alpha \left(
0\right) }\left( \sum \limits_{j=-\infty }^{k-1}\left \vert \lambda
_{j}\right \vert ^{q_{1}}2^{\left[ \left( \beta -n\delta _{2}\right) \left(
k-j\right) -\alpha \left( 0\right) j\right] q_{1}}\right) \\
&\lesssim &\sup \limits_{L\leq 0,L\in 
\mathbb{Z}
}2^{-L\lambda q_{1}}\sum \limits_{j=-\infty }^{L}\left \vert \lambda
_{j}\right \vert ^{q_{1}}\sum \limits_{k=j+1}^{-1}2^{\left( \beta -n\delta
_{2}-\alpha \left( 0\right) \right) \left( k-j\right) q_{1}} \\
&\lesssim &\Lambda .
\end{eqnarray*}%
Now, let $1<q_{1}<\infty $. and $\frac{1}{q_{1}}+\frac{1}{q_{1}^{\prime }}=1$%
. By (\ref{12}), H\"{o}lder's inequality and the assumption $\beta -n\delta
_{2}<\alpha \left( 0\right) $,%
\begin{equation*}
F_{2}\lesssim \sup \limits_{L\leq 0,L\in 
\mathbb{Z}
}2^{-L\lambda q_{1}}\sum \limits_{k=-\infty }^{L}2^{kq_{1}\alpha \left(
0\right) }\left( \sum \limits_{j=-\infty }^{k-1}\left \vert \lambda
_{j}\right \vert 2^{\left( \beta -n\delta _{2}\right) \left( k-j\right)
-\alpha \left( 0\right) j}\right) ^{q_{1}}
\end{equation*}%
\begin{eqnarray*}
&\lesssim &\sup \limits_{L\leq 0,L\in 
\mathbb{Z}
}2^{-L\lambda q_{1}}\sum \limits_{k=-\infty }^{L}2^{kq_{1}\alpha \left(
0\right) }\left( \sum \limits_{j=-\infty }^{k-1}\left \vert \lambda
_{j}\right \vert ^{q_{1}}2^{\left[ \left( \beta -n\delta _{2}\right) \left(
k-j\right) -\alpha \left( 0\right) j\right] \frac{q_{1}}{2}}\right) \\
&&\times \left( \sum \limits_{j=-\infty }^{k-1}2^{\left[ \left( \beta
-n\delta _{2}\right) \left( k-j\right) -\alpha \left( 0\right) j\right] 
\frac{q_{1}^{\prime }}{2}}\right) ^{\frac{q_{1}}{q_{1}^{\prime }}} \\
&\lesssim &\sup \limits_{L\leq 0,L\in 
\mathbb{Z}
}2^{-L\lambda q_{1}}\sum \limits_{k=-\infty }^{L}2^{kq_{1}\alpha \left(
0\right) }\left( \sum \limits_{j=-\infty }^{k-1}\left \vert \lambda
_{j}\right \vert ^{q_{1}}2^{\left[ \left( \beta -n\delta _{2}\right) \left(
k-j\right) -\alpha \left( 0\right) j\right] \frac{q_{1}}{2}}\right) \\
&\lesssim &\sup \limits_{L\leq 0,L\in 
\mathbb{Z}
}2^{-L\lambda q_{1}}\sum \limits_{j=-\infty }^{L}\left \vert \lambda
_{j}\right \vert ^{q_{1}}\sum \limits_{k=j+1}^{-1}2^{\left( \beta -n\delta
_{2}-\alpha \left( 0\right) \right) \left( k-j\right) \frac{q_{1}}{2}} \\
&\lesssim &\Lambda .
\end{eqnarray*}%
Thus, we have $F\lesssim \Lambda .$

Then we estimate $G$. To proceed, we consider%
\begin{eqnarray*}
G &=&\sum \limits_{k=-\infty }^{-1}2^{kq_{1}\alpha \left( 0\right) }\left
\Vert \left( I^{\beta }f\right) \chi _{k}\right \Vert _{L^{p_{2}(\cdot
)}}^{q_{1}} \\
&\leq &\sum \limits_{k=-\infty }^{-1}2^{kq_{1}\alpha \left( 0\right) }\left(
\sum \limits_{j=k}^{\infty }\left \vert \lambda _{j}\right \vert \left \Vert
\left( I^{\beta }a_{j}\right) \chi _{k}\right \Vert _{L^{p_{2}(\cdot
)}}\right) ^{q_{1}} \\
&&+\sum \limits_{k=-\infty }^{-1}2^{kq_{1}\alpha \left( 0\right) }\left(
\sum \limits_{j=-\infty }^{k-1}\left \vert \lambda _{j}\right \vert \left
\Vert \left( I^{\beta }a_{j}\right) \chi _{k}\right \Vert _{L^{p_{2}(\cdot
)}}\right) ^{q_{1}} \\
&:&=G_{1}+G_{2}.
\end{eqnarray*}%
Thus, we consider $G_{1}$ in two cases. Indeed, when $0<q_{1}\leq 1$, by (%
\ref{10}) and (\ref{11}), we obtain that%
\begin{eqnarray*}
G_{1} &=&\sum \limits_{k=-\infty }^{-1}2^{kq_{1}\alpha \left( 0\right)
}\left( \sum \limits_{j=k}^{\infty }\left \vert \lambda _{j}\right \vert
\left \Vert \left( I^{\beta }a_{j}\right) \chi _{k}\right \Vert
_{L^{p_{2}(\cdot )}}\right) ^{q_{1}} \\
&\lesssim &\sum \limits_{k=-\infty }^{-1}2^{kq_{1}\alpha \left( 0\right)
}\left( \sum \limits_{j=k}^{\infty }\left \vert \lambda _{j}\right \vert
2^{-j\alpha _{j}}\right) ^{q_{1}} \\
&\lesssim &\sum \limits_{k=-\infty }^{-1}2^{kq_{1}\alpha \left( 0\right)
}\left( \sum \limits_{j=k}^{-1}\left \vert \lambda _{j}\right \vert
^{q_{1}}2^{-j\alpha \left( 0\right) q_{1}}+\sum \limits_{j=0}^{\infty }\left
\vert \lambda _{j}\right \vert ^{q_{1}}2^{-j\alpha _{\infty }q_{1}}\right) \\
&\lesssim &\sum \limits_{k=-\infty }^{-1}\sum \limits_{j=k}^{-1}\left \vert
\lambda _{j}\right \vert ^{q_{1}}2^{\left( k-j\right) \alpha \left( 0\right)
q_{1}}+\sum \limits_{k=-\infty }^{-1}2^{k\alpha \left( 0\right) q_{1}}\sum
\limits_{j=0}^{\infty }\left \vert \lambda _{j}\right \vert
^{q_{1}}2^{-j\alpha _{\infty }q_{1}} \\
&\lesssim &\sum \limits_{j=-\infty }^{-1}\left \vert \lambda _{j}\right
\vert ^{q_{1}}\sum \limits_{k=\infty }^{j}2^{\left( k-j\right) \alpha \left(
0\right) q_{1}}+\sum \limits_{j=0}^{\infty }\left \vert \lambda _{j}\right
\vert ^{q_{1}}2^{-j\alpha _{\infty }q_{1}}\sum \limits_{k=-\infty
}^{-1}2^{k\alpha \left( 0\right) q_{1}}
\end{eqnarray*}%
\begin{eqnarray*}
&\lesssim &\sum \limits_{j=-\infty }^{-1}\left \vert \lambda _{j}\right
\vert ^{q_{1}}+\sum \limits_{j=0}^{\infty }\left \vert \lambda _{j}\right
\vert ^{q_{1}}2^{-j\alpha _{\infty }q_{1}}2^{-j\lambda q_{1}}\sum
\limits_{k=-\infty }^{-1}2^{k\alpha \left( 0\right) q_{1}} \\
&\lesssim &\Lambda +\Lambda \sum \limits_{i=-\infty }^{j}\left \vert \lambda
_{i}\right \vert ^{q_{1}}\sum \limits_{j=0}^{\infty }2^{\left( \lambda
-\alpha _{\infty }\right) q_{1}j}\sum \limits_{k=-\infty }^{j}2^{k\alpha
\left( 0\right) q_{1}} \\
&\lesssim &\Lambda \text{ }\left( \alpha _{\infty }>\lambda \right) .
\end{eqnarray*}%
When $1<q_{1}<\infty $, applying H\"{o}lder's inequality such that $\frac{1}{%
q_{1}}+\frac{1}{q_{1}^{\prime }}=1$ and (\ref{10}), we know that%
\begin{eqnarray*}
G_{1} &\lesssim &\sum \limits_{k=-\infty }^{-1}2^{kq_{1}\alpha \left(
0\right) }\left( \sum \limits_{j=k}^{\infty }\left \vert \lambda _{j}\right
\vert 2^{-j\alpha _{j}}\right) ^{q_{1}} \\
&\lesssim &\sum \limits_{k=-\infty }^{-1}\left( \sum \limits_{j=k}^{-1}\left
\vert \lambda _{j}\right \vert 2^{\left( k-j\right) \alpha \left( 0\right)
}\right) ^{q_{1}}+\sum \limits_{k=-\infty }^{-1}2^{kq_{1}\alpha \left(
0\right) }\left( \sum \limits_{j=0}^{\infty }\left \vert \lambda _{j}\right
\vert 2^{-j\alpha _{\infty }}\right) ^{q_{1}} \\
&\lesssim &\sum \limits_{k=-\infty }^{-1}\left( \sum \limits_{j=k}^{-1}\left
\vert \lambda _{j}\right \vert ^{q_{1}}2^{\left( k-j\right) \alpha \left(
0\right) \frac{q_{1}}{2}}\right) \left( \sum \limits_{j=k}^{-1}2^{\left(
k-j\right) \alpha \left( 0\right) \frac{q_{1}^{\prime }}{2}}\right) ^{\frac{%
q_{1}}{q_{1}^{\prime }}} \\
&&+\sum \limits_{k=-\infty }^{-1}2^{kq_{1}\alpha \left( 0\right) }\left(
\sum \limits_{j=0}^{\infty }\left \vert \lambda _{j}\right \vert
^{q_{1}}2^{-j\alpha _{\infty }\frac{q_{1}}{2}}\right) \left( \sum
\limits_{j=0}^{\infty }2^{-j\alpha _{\infty }\frac{q_{1}^{\prime }}{2}%
}\right) ^{\frac{q_{1}}{q_{1}^{\prime }}} \\
&\lesssim &\sum \limits_{k=-\infty }^{-1}\left \vert \lambda _{j}\right
\vert ^{q_{1}}\sum \limits_{k=-\infty }^{j}2^{\left( k-j\right) \alpha
\left( 0\right) \frac{q_{1}}{2}}+\sum \limits_{k=-\infty
}^{-1}2^{kq_{1}\alpha \left( 0\right) }\sum \limits_{j=0}^{\infty }\left
\vert \lambda _{j}\right \vert ^{q_{1}}2^{-j\alpha _{\infty }\frac{q_{1}}{2}}
\end{eqnarray*}%
\begin{eqnarray*}
&\lesssim &\sum \limits_{j=-\infty }^{-1}\left \vert \lambda _{j}\right
\vert ^{q_{1}}+\sum \limits_{j=0}^{\infty }2^{\left( \lambda -\frac{\alpha
_{\infty }}{2}\right) jq_{1}}2^{-j\lambda q_{1}}\sum \limits_{i=-\infty
}^{j}\left \vert \lambda _{i}\right \vert ^{q_{1}}\sum \limits_{k=-\infty
}^{-1}2^{kq_{1}\alpha \left( 0\right) } \\
&\lesssim &\Lambda +\Lambda \sum \limits_{j=0}^{\infty }2^{\left( \lambda -%
\frac{\alpha _{\infty }}{2}\right) jq_{1}}\sum \limits_{k=-\infty
}^{-1}2^{k\alpha \left( 0\right) q_{1}} \\
&\lesssim &\Lambda \text{ }\left( \alpha _{\infty }>\lambda \right) .
\end{eqnarray*}%
For $G_{2}$, when $0<q_{1}\leq 1$, by (\ref{12}), (\ref{11}) and the
assumption $\beta -n\delta _{2}<\alpha \left( 0\right) $, we obtain that%
\begin{eqnarray*}
G_{2} &=&\sum \limits_{k=-\infty }^{-1}2^{kq_{1}\alpha \left( 0\right)
}\left( \sum \limits_{j=-\infty }^{k-1}\left \vert \lambda _{j}\right \vert
\left \Vert \left( I^{\beta }a_{j}\right) \chi _{k}\right \Vert
_{L^{p_{2}(\cdot )}}\right) ^{q_{1}} \\
&\lesssim &\sum \limits_{k=-\infty }^{-1}2^{kq_{1}\alpha \left( 0\right)
}\left( \sum \limits_{j=-\infty }^{k-1}\left \vert \lambda _{j}\right \vert
^{q_{1}}2^{\left[ \left( \beta -n\delta _{2}\right) \left( k-j\right)
-\alpha \left( 0\right) j\right] q_{1}}\right) \\
&\lesssim &\sum \limits_{k=-\infty }^{-1}\left \vert \lambda _{j}\right
\vert ^{q_{1}}\sum \limits_{k=j+1}^{-1}2^{\left( \beta -n\delta _{2}-\alpha
\left( 0\right) \right) \left( k-j\right) q_{1}} \\
&\lesssim &\Lambda .
\end{eqnarray*}%
Now, let $1<q_{1}<\infty $. and $\frac{1}{q_{1}}+\frac{1}{q_{1}^{\prime }}=1$%
. By (\ref{12}), H\"{o}lder's inequality and the assumption $\beta -n\delta
_{2}<\alpha \left( 0\right) $,%
\begin{eqnarray*}
G_{2} &\lesssim &\sum \limits_{k=-\infty }^{-1}2^{kq_{1}\alpha \left(
0\right) }\left( \sum \limits_{j=-\infty }^{k-1}\left \vert \lambda
_{j}\right \vert 2^{\left( \beta -n\delta _{2}\right) \left( k-j\right)
-\alpha \left( 0\right) j}\right) ^{q_{1}} \\
&\lesssim &\sum \limits_{k=-\infty }^{-1}2^{kq_{1}\alpha \left( 0\right)
}\left( \sum \limits_{j=-\infty }^{k-1}\left \vert \lambda _{j}\right \vert
^{q_{1}}2^{\left[ \left( \beta -n\delta _{2}\right) \left( k-j\right)
-\alpha \left( 0\right) j\right] \frac{q_{1}}{2}}\right) \\
&&\times \left( \sum \limits_{j=-\infty }^{k-1}2^{\left[ \left( \beta
-n\delta _{2}\right) \left( k-j\right) -\alpha \left( 0\right) j\right] 
\frac{q_{1}^{\prime }}{2}}\right) ^{\frac{q_{1}}{q_{1}^{\prime }}} \\
&\lesssim &\sum \limits_{k=-\infty }^{-1}2^{kq_{1}\alpha \left( 0\right)
}\left( \sum \limits_{j=-\infty }^{k-1}\left \vert \lambda _{j}\right \vert
^{q_{1}}2^{\left[ \left( \beta -n\delta _{2}\right) \left( k-j\right)
-\alpha \left( 0\right) j\right] \frac{q_{1}}{2}}\right) \\
&\lesssim &\sum \limits_{j=-\infty }^{-1}\left \vert \lambda _{j}\right
\vert ^{q_{1}}\sum \limits_{k=j+1}^{-1}2^{\left( \beta -n\delta _{2}-\alpha
\left( 0\right) \right) \left( k-j\right) \frac{q_{1}}{2}} \\
&\lesssim &\Lambda .
\end{eqnarray*}%
Therefore, we have $G\lesssim \Lambda .$

Now, we turn to estimate $H$. We represent $H$ as%
\begin{eqnarray*}
H &=&\sup \limits_{L>0,L\in 
\mathbb{Z}
}2^{-L\lambda q_{1}}\left( \sum \limits_{k=0}^{L}2^{kq_{1}\alpha _{\infty
}}\left \Vert \left( I^{\beta }f\right) \chi _{k}\right \Vert _{L^{p_{2}(\cdot
)}}^{q_{1}}\right)  \\
&\lesssim &\sup \limits_{L>0,L\in 
\mathbb{Z}
}2^{-L\lambda q_{1}}\sum \limits_{k=0}^{L}2^{kq_{1}\alpha _{\infty }}\left(
\sum \limits_{j=k}^{\infty }\left \vert \lambda _{j}\right \vert \left \Vert
\left( I^{\beta }a_{j}\right) \chi _{k}\right \Vert _{L^{p_{2}(\cdot
)}}\right) ^{q_{1}} \\
&&+\sup \limits_{L>0,L\in 
\mathbb{Z}
}2^{-L\lambda q_{1}}\sum \limits_{k=0}^{L}2^{kq_{1}\alpha _{\infty }}\left(
\sum \limits_{j=-\infty }^{k-1}\left \vert \lambda _{j}\right \vert \left \Vert
\left( I^{\beta }a_{j}\right) \chi _{k}\right \Vert _{L^{p_{2}(\cdot
)}}\right) ^{q_{1}} \\
&:&=H_{1}+H_{2}.
\end{eqnarray*}%
To do so, we consider $H_{1}$ and $H_{2}$ into two cases $0<q_{1}\leq 1$ and 
$1<q_{1}<\infty $, as above. Similar to $F_{1}$, when $0<q_{1}\leq 1$, by
the boundedness of $I^{\beta }$ from $L^{p_{1}\left( \cdot \right) }$ to $%
L^{p_{2}\left( \cdot \right) }$ and using (\ref{10}) and (\ref{11}),%
\begin{eqnarray*}
H_{1} &=&\sup \limits_{L>0,L\in 
\mathbb{Z}
}2^{-L\lambda q_{1}}\sum \limits_{k=0}^{L}2^{kq_{1}\alpha _{\infty }}\left(
\sum \limits_{j=k}^{\infty }\left \vert \lambda _{j}\right \vert \left \Vert
\left( I^{\beta }a_{j}\right) \chi _{k}\right \Vert _{L^{p_{2}(\cdot
)}}\right) ^{q_{1}} \\
&\lesssim &\sup \limits_{L>0,L\in 
\mathbb{Z}
}2^{-L\lambda q_{1}}\sum \limits_{k=0}^{L}2^{kq_{1}\alpha _{\infty
}}\sum \limits_{j=k}^{\infty }\left \vert \lambda _{j}\right \vert
^{q_{1}}\left \Vert \left( I^{\beta }a_{j}\right) \chi _{k}\right \Vert
_{L^{p_{2}(\cdot )}}^{q_{1}} \\
&\lesssim &\sup \limits_{L>0,L\in 
\mathbb{Z}
}2^{-L\lambda q_{1}}\sum \limits_{k=0}^{L}2^{kq_{1}\alpha _{\infty
}}\sum \limits_{j=k}^{\infty }\left \vert \lambda _{j}\right \vert
^{q_{1}}2^{-j\alpha _{j}q_{1}}
\end{eqnarray*}%
\begin{eqnarray*}
&\lesssim &\sup \limits_{L>0,L\in 
\mathbb{Z}
}2^{-L\lambda q_{1}}\sum \limits_{k=0}^{L}2^{kq_{1}\alpha _{\infty
}}\sum \limits_{j=k}^{\infty }\left \vert \lambda _{j}\right \vert
^{q_{1}}2^{-j\alpha _{\infty }q_{1}} \\
&=&\sup \limits_{L>0,L\in 
\mathbb{Z}
}2^{-L\lambda q_{1}}\sum \limits_{j=0}^{L}\left \vert \lambda _{j}\right \vert
^{q_{1}}\sum \limits_{k=0}^{j}2^{\left( k-j\right) q_{1}\alpha _{\infty }} \\
&&+\sup \limits_{L>0,L\in 
\mathbb{Z}
}2^{-L\lambda q_{1}}\sum \limits_{j=L}^{\infty }\left \vert \lambda
_{j}\right \vert ^{q_{1}}\sum \limits_{k=0}^{L}2^{\left( k-j\right)
q_{1}\alpha _{\infty }} \\
&\lesssim &\sup \limits_{L>0,L\in 
\mathbb{Z}
}2^{-L\lambda q_{1}}\sum \limits_{j=0}^{L}\left \vert \lambda _{j}\right \vert
^{q_{1}} \\
&&+\sup \limits_{L>0,L\in 
\mathbb{Z}
}\sum \limits_{j=L}^{\infty }2^{\left( jq_{1}\lambda -Lq_{1}\lambda \right)
}2^{-j\lambda q_{1}}\dsum \limits_{i=-\infty }^{j}\left \vert \lambda
_{i}\right \vert ^{q_{1}}\sum \limits_{k=0}^{L}2^{\left( k-j\right)
q_{1}\alpha _{\infty }} \\
&\lesssim &\Lambda +\Lambda \sup \limits_{L>0,L\in 
\mathbb{Z}
}\sum \limits_{j=L}^{\infty }2^{\left( j-L\right) \lambda q_{1}}2^{\left(
L-j\right) q_{1}\alpha _{\infty }} \\
&\lesssim &\Lambda +\Lambda \sup \limits_{L>0,L\in 
\mathbb{Z}
}\sum \limits_{j=L}^{\infty }2^{\left( j-L\right) q_{1}\left( \lambda -\alpha
_{\infty }\right) } \\
&\lesssim &\Lambda \text{ }\left( \alpha _{\infty }>\lambda \right) .
\end{eqnarray*}%
On the other hand, when $1<q_{1}<\infty $, by the boundedness of $I^{\beta }$
from $L^{p_{1}\left( \cdot \right) }$ to $L^{p_{2}\left( \cdot \right) }$
and using (\ref{10}) and H\"{o}lder's inequality, we have%
\begin{eqnarray*}
H_{1} &\lesssim &\sup \limits_{L>0,L\in 
\mathbb{Z}
}2^{-L\lambda q_{1}}\sum \limits_{k=0}^{L}2^{kq_{1}\alpha _{\infty }}\left(
\sum \limits_{j=k}^{\infty }\left \vert \lambda _{j}\right \vert
^{q_{1}}\left \Vert \left( I^{\beta }a_{j}\right) \chi _{k}\right \Vert
_{L^{p_{2}(\cdot )}}^{\frac{q_{1}}{2}}\right)  \\
&&\times \left( \sum \limits_{j=k}^{\infty }\left \Vert \left( I^{\beta
}a_{j}\right) \chi _{k}\right \Vert _{L^{p_{2}(\cdot )}}^{\frac{q_{1}^{\prime
}}{2}}\right) ^{\frac{q_{1}}{q_{1}^{\prime }}} \\
&\lesssim &\sup \limits_{L>0,L\in 
\mathbb{Z}
}2^{-L\lambda q_{1}}\sum \limits_{k=0}^{L}2^{kq_{1}\alpha _{\infty }}\left(
\sum \limits_{j=k}^{\infty }\left \vert \lambda _{j}\right \vert
^{q_{1}}\left \Vert a_{j}\right \Vert _{L^{p_{1}(\cdot )}}^{\frac{q_{1}}{2}%
}\right)  \\
&&\times \left( \sum \limits_{j=k}^{\infty }\left \Vert a_{j}\right \Vert
_{L^{p_{1}(\cdot )}}^{\frac{q_{1}^{\prime }}{2}}\right) ^{\frac{q_{1}}{%
q_{1}^{\prime }}} \\
&\lesssim &\sup \limits_{L>0,L\in 
\mathbb{Z}
}2^{-L\lambda q_{1}}\sum \limits_{k=0}^{L}2^{kq_{1}\alpha _{\infty }}\left(
\sum \limits_{j=k}^{\infty }\left \vert \lambda _{j}\right \vert
^{q_{1}}\left \vert B_{j}\right \vert ^{-\frac{\alpha _{j}q_{1}}{\left(
2n\right) }}\right)  \\
&&\times \left( \sum \limits_{j=k}^{\infty }\left \vert B_{j}\right \vert ^{-%
\frac{\alpha _{j}q_{1}^{\prime }}{\left( 2n\right) }}\right) ^{\frac{q_{1}}{%
q_{1}^{\prime }}} \\
&\lesssim &\sup \limits_{L>0,L\in 
\mathbb{Z}
}2^{-L\lambda q_{1}}\sum \limits_{k=0}^{L}2^{k\frac{q_{1}}{2}\alpha _{\infty
}}\left( \sum \limits_{j=k}^{\infty }\left \vert \lambda _{j}\right \vert
^{q_{1}}\left \vert B_{j}\right \vert ^{-\frac{\alpha _{j}q_{1}}{\left(
2n\right) }}\right) 
\end{eqnarray*}%
\begin{eqnarray*}
&=&\sup \limits_{L>0,L\in 
\mathbb{Z}
}2^{-L\lambda q_{1}}\sum \limits_{j=0}^{L}\left \vert \lambda _{j}\right \vert
^{q_{1}}\sum \limits_{k=0}^{j}2^{\left( k-j\right) \frac{q_{1}}{2}\alpha
_{\infty }} \\
&&+\sup \limits_{L>0,L\in 
\mathbb{Z}
}2^{-L\lambda q_{1}}\sum \limits_{j=L}^{\infty }\left \vert \lambda
_{j}\right \vert ^{q_{1}}\sum \limits_{k=0}^{L}2^{\left( k-j\right) \frac{q_{1}%
}{2}\alpha _{\infty }} \\
&\lesssim &\sup \limits_{L>0,L\in 
\mathbb{Z}
}2^{-L\lambda q_{1}}\sum \limits_{j=0}^{L}\left \vert \lambda _{j}\right \vert
^{q_{1}} \\
&&+\sup \limits_{L>0,L\in 
\mathbb{Z}
}\sum \limits_{j=L}^{\infty }2^{\left( jq_{1}\lambda -Lq_{1}\lambda \right)
}2^{-j\lambda q_{1}}\dsum \limits_{i=-\infty }^{j}\left \vert \lambda
_{i}\right \vert ^{q_{1}}\sum \limits_{k=0}^{L}2^{\left( k-j\right) \frac{q_{1}%
}{2}\alpha _{\infty }} \\
&\lesssim &\Lambda +\Lambda \sup \limits_{L>0,L\in 
\mathbb{Z}
}\sum \limits_{j=L}^{\infty }2^{\left( j-L\right) \lambda q_{1}}2^{\left(
L-j\right) \frac{q_{1}}{2}\alpha _{\infty }} \\
&\lesssim &\Lambda +\Lambda \sup \limits_{L>0,L\in 
\mathbb{Z}
}\sum \limits_{j=L}^{\infty }2^{\left( j-L\right) q_{1}\left( \lambda -\frac{%
\alpha _{\infty }}{2}\right) } \\
&\lesssim &\Lambda \text{ }\left( \alpha _{\infty }>2\lambda \right) .
\end{eqnarray*}%
Finally, we consider the term $H_{2}$ into two cases $0<q_{1}\leq 1$ and $%
1<q_{1}<\infty $.

When $0<q_{1}\leq 1$, by (\ref{0}), (\ref{11}) and the assumptions that $%
\beta -n\delta _{2}<\alpha \left( 0\right) $ and $\beta -n\delta _{2}<\alpha
_{\infty }$, we get%
\begin{eqnarray*}
H_{2} &=&\sup \limits_{L>0,L\in 
\mathbb{Z}
}2^{-L\lambda q_{1}}\sum \limits_{k=0}^{L}2^{kq_{1}\alpha _{\infty }}\left(
\sum \limits_{j=-\infty }^{k-1}\left \vert \lambda _{j}\right \vert \left \Vert
\left( I^{\beta }a_{j}\right) \chi _{k}\right \Vert _{L^{p_{2}(\cdot
)}}\right) ^{q_{1}} \\
&\lesssim &\sup \limits_{L>0,L\in 
\mathbb{Z}
}2^{-L\lambda q_{1}}\sum \limits_{k=0}^{L}2^{kq_{1}\alpha _{\infty }}\left(
\sum \limits_{j=-\infty }^{k-1}\left \vert \lambda _{j}\right \vert ^{q_{1}}2^{%
\left[ \left( \beta -n\delta _{1}\right) \left( j-k\right) -\alpha _{j}j%
\right] q_{1}}\right)  \\
&=&\sup \limits_{L>0,L\in 
\mathbb{Z}
}2^{-L\lambda q_{1}}\sum \limits_{k=0}^{L}2^{kq_{1}\alpha _{\infty }}\left(
\sum \limits_{j=-\infty }^{-1}\left \vert \lambda _{j}\right \vert ^{q_{1}}2^{%
\left[ \left( \beta -n\delta _{1}\right) \left( j-k\right) -\alpha \left(
0\right) j\right] q_{1}}\right)  \\
&&+\sup \limits_{L>0,L\in 
\mathbb{Z}
}2^{-L\lambda q_{1}}\sum \limits_{k=0}^{L}2^{kq_{1}\alpha _{\infty }}\left(
\sum \limits_{j=0}^{k-1}\left \vert \lambda _{j}\right \vert ^{q_{1}}2^{\left[
\left( \beta -n\delta _{1}\right) \left( j-k\right) -\alpha _{\infty }j%
\right] q_{1}}\right)  \\
&\lesssim &\sup \limits_{L>0,L\in 
\mathbb{Z}
}2^{-L\lambda q_{1}}\sum \limits_{k=0}^{L}2^{kq_{1}\left[ \alpha _{\infty
}-\left( \beta -n\delta _{1}\right) \right] }\sum \limits_{j=-\infty
}^{-1}\left \vert \lambda _{j}\right \vert ^{q_{1}}2^{\left[ \beta -n\delta
_{1}-\alpha \left( 0\right) \right] jq_{1}} \\
&&+\sup \limits_{L>0,L\in 
\mathbb{Z}
}2^{-L\lambda q_{1}}\sum \limits_{k=0}^{L}\left \vert \lambda _{j}\right \vert
^{q_{1}}\sum \limits_{k=j+1}^{\infty }2^{\left[ \left( \beta -n\delta
_{1}-\alpha _{\infty }\right) \left( j-k\right) \right] q_{1}} \\
&\lesssim &\sup \limits_{L>0,L\in 
\mathbb{Z}
}2^{-L\lambda q_{1}}\sum \limits_{j=-\infty }^{-1}\left \vert \lambda
_{j}\right \vert ^{q_{1}}+\sup \limits_{L>0,L\in 
\mathbb{Z}
}2^{-L\lambda q_{1}}\sum \limits_{j=0}^{L-1}\left \vert \lambda
_{j}\right \vert ^{q_{1}} \\
&\lesssim &\Lambda .
\end{eqnarray*}%
Now, let $1<q_{1}<\infty $. and $\frac{1}{q_{1}}+\frac{1}{q_{1}^{\prime }}=1$%
. By (\ref{0}), using H\"{o}lder's inequality and the conditions $\beta
-n\delta _{2}<\alpha \left( 0\right) $ and $\beta -n\delta _{2}<\alpha
_{\infty }$, we obtain that%
\begin{eqnarray*}
H_{2} &\lesssim &\sup \limits_{L>0,L\in 
\mathbb{Z}
}2^{-L\lambda q_{1}}\sum \limits_{k=0}^{L}2^{kq_{1}\alpha _{\infty }}\left(
\sum \limits_{j=-\infty }^{k-1}\left \vert \lambda _{j}\right \vert 2^{\left(
\beta -n\delta _{1}\right) \left( j-k\right) -\alpha _{j}j}\right) ^{q_{1}}
\\
&\lesssim &\sup \limits_{L>0,L\in 
\mathbb{Z}
}2^{-L\lambda q_{1}}\sum \limits_{k=0}^{L}2^{kq_{1}\alpha _{\infty }}\left(
\sum \limits_{j=-\infty }^{-1}\left \vert \lambda _{j}\right \vert 2^{\left(
\beta -n\delta _{1}\right) \left( j-k\right) -\alpha \left( 0\right)
j}\right) ^{q_{1}} \\
&&+\sup \limits_{L>0,L\in 
\mathbb{Z}
}2^{-L\lambda q_{1}}\sum \limits_{k=0}^{L}2^{kq_{1}\alpha _{\infty }}\left(
\sum \limits_{j=0}^{k-1}\left \vert \lambda _{j}\right \vert 2^{\left( \beta
-n\delta _{1}\right) \left( j-k\right) -\alpha _{\infty }j}\right) ^{q_{1}}
\\
&\lesssim &\sup \limits_{L>0,L\in 
\mathbb{Z}
}2^{-L\lambda q_{1}}\sum \limits_{k=0}^{L}2^{kq_{1}\left( \alpha _{\infty
}+\beta -n\delta _{1}\right) }\left( \sum \limits_{j=-\infty }^{-1}\left \vert
\lambda _{j}\right \vert 2^{-j\left[ \left( \beta -n\delta _{1}\right)
+\alpha \left( 0\right) \right] }\right) ^{q_{1}} \\
&&+\sup \limits_{L>0,L\in 
\mathbb{Z}
}2^{-L\lambda q_{1}}\sum \limits_{k=0}^{L}\left(
\sum \limits_{j=0}^{k-1}\left \vert \lambda _{j}\right \vert 2^{\left( \beta
-n\delta _{1}-\alpha _{\infty }\right) \left( j-k\right) }\right) ^{q_{1}} \\
&\lesssim &\left( \sup \limits_{L>0,L\in 
\mathbb{Z}
}2^{-L\lambda q_{1}}\sum \limits_{j=-\infty }^{-1}\left \vert \lambda
_{j}\right \vert ^{q_{1}}2^{\left[ \left( \beta -n\delta _{1}\right) -\alpha
\left( 0\right) \right] j\frac{q_{1}}{2}}\right)  \\
&&\times \left( \sum \limits_{j=-\infty }^{-1}2^{\left[ \left( \beta -n\delta
_{1}\right) -\alpha \left( 0\right) \right] j\frac{q_{1}^{\prime }}{2}%
}\right) ^{\frac{q_{1}}{q_{1}^{\prime }}} \\
&&+\sup \limits_{L>0,L\in 
\mathbb{Z}
}2^{-L\lambda q_{1}}\sum \limits_{k=0}^{L}\left(
\sum \limits_{j=0}^{k-1}\left \vert \lambda _{j}\right \vert ^{q_{1}}2^{\left[
\left( \beta -n\delta _{1}-\alpha _{\infty }\right) \left( j-k\right) \right]
\frac{q_{1}}{2}}\right)  \\
&&\times \left( \sum \limits_{j=0}^{k-1}2^{\left[ \left( \beta -n\delta
_{1}-\alpha _{\infty }\right) \left( j-k\right) \right] \frac{q_{1}^{\prime }%
}{2}}\right) ^{\frac{q_{1}}{q_{1}^{\prime }}} \\
&\lesssim &\sup \limits_{L>0,L\in 
\mathbb{Z}
}2^{-L\lambda q_{1}}\sum \limits_{j=-\infty }^{-1}\left \vert \lambda
_{j}\right \vert ^{q_{1}}2^{\left[ \left( \beta -n\delta _{1}\right) -\alpha
\left( 0\right) \right] j\frac{q_{1}}{2}} \\
&&+\sup \limits_{L>0,L\in 
\mathbb{Z}
}2^{-L\lambda q_{1}}\sum \limits_{k=0}^{L}\sum \limits_{j=0}^{k-1}\left \vert
\lambda _{j}\right \vert ^{q_{1}}2^{\left[ \left( \beta -n\delta _{1}-\alpha
_{\infty }\right) \left( j-k\right) \right] \frac{q_{1}}{2}} \\
&\lesssim &\sup \limits_{L>0,L\in 
\mathbb{Z}
}2^{-L\lambda q_{1}}\sum \limits_{j=-\infty }^{-1}\left \vert \lambda
_{j}\right \vert ^{q_{1}} \\
&&+\sup \limits_{L>0,L\in 
\mathbb{Z}
}2^{-L\lambda q_{1}}\sum \limits_{j=0}^{L-1}\left \vert \lambda
_{j}\right \vert ^{q_{1}}\sum \limits_{k=j+1}^{L}2^{\left[ \left( \beta
-n\delta _{1}-\alpha _{\infty }\right) \left( j-k\right) \right] \frac{q_{1}%
}{2}} \\
&\lesssim &\sup \limits_{L>0,L\in 
\mathbb{Z}
}2^{-L\lambda q_{1}}\sum \limits_{j=-\infty }^{-1}\left \vert \lambda
_{j}\right \vert ^{q_{1}}+\sup \limits_{L>0,L\in 
\mathbb{Z}
}2^{-L\lambda q_{1}}\sum \limits_{j=0}^{L-1}\left \vert \lambda
_{j}\right \vert ^{q_{1}} \\
&\lesssim &\Lambda .
\end{eqnarray*}%
Hence, we have $H\lesssim \Lambda .$ By combining the above inequalities for 
$F,G$ and $H$, we obtain (\ref{100}).
\end{proof}

\end{document}